\Crefname{paragraph}{Section}{Sections}
\newcommand{\ensemblenombre}[1]{\mathbb{#1}}
\newcommand{\N}{\ensemblenombre{N}}
\newcommand{\R}{} 
\renewcommand{\R}{\ensemblenombre{R}}
\newcommand{\norme}[1]{\left\lVert#1\right\rVert}
\newcommand{\dive}[1]{\mathrm{div}}
\newcommand{\ov}[1]{\overline{#1}}
\providecommand{\keywords}[1]{\noindent {\textit{Keywords:}} #1}
\theoremstyle{plain} 
\newtheorem{prop}{Proposition}[section] 
\newtheorem{theo}[prop]{Theorem}
\newtheorem{lem}[prop]{Lemma}
\newtheorem{cor}[prop]{Corollary}
\theoremstyle{definition}
\newtheorem{rmk}[prop]{Remark}
\newtheorem{ass}[prop]{Assumption}
\def\dx{\,\textnormal{d}x}
\def\dt{\textnormal{d}t}
\def\d{\textnormal{d}}
\def\esp{{\mathbb{E}}}
\def\dom{\mathcal{D}}
\def\fil{\mathcal{F}}
\newcommand{\vertiii}[1]{{\left\vert\kern-0.25ex\left\vert\kern-0.25ex\left\vert #1 
    \right\vert\kern-0.25ex\right\vert\kern-0.25ex\right\vert}}
\let\original@addcontentsline\addcontentsline
\newcommand{\dummy@addcontentsline}[3]{}
\newcommand{\DeactivateToc}{\let\addcontentsline\dummy@addcontentsline}
\newcommand{\ActivateToc}{\let\addcontentsline\original@addcontentsline}
\begin{document}

\title{Statistical null-controllability of stochastic nonlinear parabolic equations}
\author{V\'ictor Hern\'andez-Santamar\'ia\thanks{Instituto de Matem\'aticas, Universidad Nacional Aut\'onoma de M\'exico, Circuito Exterior, C.U., C.P. 04510 CDMX, Mexico. E-mail: \texttt{victor.santamaria@im.unam.mx}} \and K\'evin Le Balc'h\thanks{Institut de Math\'ematiques de Bordeaux, 351 Cours de la Lib\'eration, 33400 Bordeaux, France. E-mail: \texttt{kevin.le-balch@math.u-bordeaux.fr}} \and Liliana Peralta\thanks{Centro de Investigaci\'on en Matem\'aticas, UAEH, Carretera Pachuca-Tulancingo km 4.5 Pachuca, Hidalgo 42184, Mexico. E-mail: \texttt{liliana\_peralta@uaeh.edu.mx}}}

\maketitle

\begin{abstract}
In this paper, we consider forward stochastic nonlinear parabolic equations, with a control localized in the drift term. Under suitable assumptions, we prove the small-time global null-controllability, with a truncated nonlinearity. We also prove the “statistical” local null-controllability of the true system. The proof relies on a precise estimation of the cost of null-controllability of the stochastic heat equation and on an adaptation of the source term method to the stochastic setting. The main difficulty comes from the estimation of the nonlinearity in the fixed point argument due to the lack of regularity (in probability) of the functional spaces where stochastic parabolic equations are well-posed. This main issue is tackled through a truncation procedure. As relevant examples that are covered by our results, let us mention the stochastic Burgers equation in the one dimensional case and the Allen-Cahn equation up to the three-dimensional setting.
\end{abstract}
\keywords{Local null-controllability, observability,  semilinear stochastic parabolic equations, stochastic source term method, Lebeau-Robbiano method.}
\footnotesize
\tableofcontents
\normalsize

\section{Introduction}

\subsection{Main result}
Let $T>0$ be a positive time, $\dom$ be a sufficiently smooth bounded, connected, open subset of $\R^n$, with $1 \leq n \leq 3$, whose boundary is denoted by $\Gamma := \partial\dom$ and $\dom_0$ be a nonempty open subset of $\dom$. We introduce the notation $\chi_{\dom_0}$, for the characteristic function of the set $\dom_0$.
 
Let $(\Omega, \mathcal{F}, \{\mathcal{F}_t\}_{t \geq 0}, \mathbb{P})$ be a complete filtered probability space on which a one-dimensional standard Brownian motion $\{ W(t)\}_{t \geq 0}$ is defined such that $\{\mathcal{F}_t\}_{t \geq 0}$ is the natural filtration generated by $W(\cdot)$ augmented by all the $\mathbb{P}$-null sets in $\mathcal{F}$. Let $X$ be a Banach space, for every $p \in [1, +\infty]$, we introduce
\begin{equation*}
L_{\mathcal{F}}^p(0,T;X) := \{ \phi : \phi\ \text{is an }X\text{-valued } \mathcal{F}_t\text{-adapted process on } [0,T]\ \text{and}\ \phi \in L^p([0,T] \times \Omega ;X)\},
\end{equation*}
endowed with the canonical norm and we denote by $L_{\mathcal{F}}^2(\Omega; C([0,T];X))$ the Banach space consisting on all $X$-valued $\mathcal{F}_t$-adapted process $\phi(\cdot)$ such that $\mathbb{E}\left(\norme{\phi(\cdot)}_{C([0,T];X)}^2 \right) < \infty$, also equipped with the canonical norm.

 We consider the stochastic semilinear heat equation

\begin{equation}\label{eq:heat_seminlinear}
\begin{cases}
\d y = (\Delta y + \chi_{\dom_0}h + f(y, \nabla y))\dt+(a y + g(y))\d W(t) &\text{in } (0,T)\times\dom, \\
y=0 &\text{on } (0,T)\times\Gamma, \\
y(0, \cdot)=y_0 &\text{in } \dom.
\end{cases}
\end{equation}
where $a \in \R$ and $f, g$ satisfy the following hypothesis.

\begin{ass}
There exist $\alpha, \beta, \gamma \in \R$ such that
\begin{align}
\notag
\forall (s,u) \in \R\times \R^n,\ f(s,u) &= \alpha s^p + \beta  s^q u, \quad p> 1,\  q \geq 1\ \text{for}\ n=1,\\
& = \alpha s^p, \quad p> 1\ \text{for}\ n=2,\label{eq:hypf}\\
& = \alpha s^p,\quad p \in (1,3]\ \text{for}\ n=3,\notag\\
\notag
g(s) &= \gamma s^r, \quad r> 1\ \text{for}\ n=1,\\
& =0,\ \text{for}\ n=2,\label{eq:hypg}\\
& =0,\ \text{for}\ n=3.\notag
\end{align}
\end{ass}

In the controlled system \eqref{eq:heat_seminlinear}, $y$ denotes the state while $h$ denotes the control, whose support is localized in $\dom_0$. We are interested in the null-controllability at time $T>0$ of \eqref{eq:heat_seminlinear}, that is to say we wonder if there exists a control $h$ such that the solution $y$ of \eqref{eq:heat_seminlinear} satisfies $y(T, \cdot) = 0$ in $\dom$, a.s.

Before stating the main results of the paper, let us introduce some notations. First, we define
\begin{equation*}
\forall t \in [0,T),\ \hat{\rho}(t) = \exp (-C/(T-t)),
\end{equation*}
where the constant $C>0$ will be defined later in the paper (see \Cref{sec:regular} below) and will only depend on $\dom$, $\dom_0$, $a$, $p$, $q$ and $r$.

We introduce the functional space: for every $t \in [0,T]$, 
\begin{align}
X_t :=\notag \Bigg\{ & y \in C([0,t];H_0^1(\dom)) \cap L^2(0,t;H^2(\dom))\ :\\
&   \sup_{0\leq s \leq t}\left\|\frac{y(s)}{\hat{\rho}(s)}\right\|_{H_0^1(\dom)} + \left(\int_{0}^{t}\left\|\frac{y(s)}{\hat{\rho}(s)}\right\|^2_{H^2(\dom)}\d{s}\right)^{1/2} < + \infty \Bigg\},\label{eq:defXt}
\end{align}
endowed with the corresponding norm. 

For each $R>0$, defining $\varphi_{R}\in C_0^\infty(\mathbb R^{+})$ such that
\begin{equation}
\label{eq:defvarphiR}
\varphi_R(s)=\begin{cases}
1, & s\leq R, \\
0, & s\geq 2R,
\end{cases}\quad
\text{and}\ \norme{\varphi_R'}_{\infty} \leq C /R,
\end{equation}
we introduce the truncated semilinearities $f_R$ and $g_R$ defined as follows
\begin{align}
\label{eq:deffR}
\forall (t, x, y) \in [0,T] \times \dom \times X_T,\ f_R(t, x, y) &= \varphi_R(\norme{y}_{X_t}) f(y(t,x), \nabla y (t,x)),\\
g_R(t,x,y) &= \varphi_R(\norme{y}_{X_t}) g(y(t,x)). \label{eq:defggR}
\end{align}
For convenience, from now we will abridge the notation in $f_R(y, \nabla y)$, $g_R(y)$ and we introduce the corresponding semilinear heat equation 
\begin{equation}\label{eq:heat_seminlinear_R}
\begin{cases}
\d y = (\Delta y + \chi_{\dom_0}h + f_R(y, \nabla y))\dt+(a y + g_R(y))\d W(t) &\text{in } (0,T)\times\dom, \\
y=0 &\text{on } (0,T)\times\Gamma, \\
y(0, \cdot)=y_0 &\text{in } \dom.
\end{cases}
\end{equation}

Now, we state our two main results.
\begin{theo}
\label{th:mainresult1}
Let $T>0$. There exists $R>0$ sufficiently small such that for every initial data $y_0 \in L^2(\Omega,\fil_0;H_0^1(\dom))$, there exists a control $h \in L^2_{\fil}(0,T;L^2(\dom))$, such that the solution $y$ of \eqref{eq:heat_seminlinear_R} satisfies $y(T, \cdot) = 0$ in $\dom$, a.s. Moreover, we have the following estimate
\begin{equation}
\label{eq:estimatenonlinearityPropfR}
 \esp \left(\norme{y}_{X_T}^2 \right)
\leq C^2\esp\left(\|y_0\|^2_{H_0^1(\dom)}\right),
\end{equation}
for a positive constant $C>0$ depending only on $T$, $\dom$, $\dom_0$, $a$, $\alpha$, $\beta$, $p$, $q$.
\end{theo}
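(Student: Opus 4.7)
The plan is to adapt the source term method to the stochastic setting, combined with a Banach fixed-point argument exploiting the truncation $\varphi_R$. The core idea is to freeze the nonlinearities: given $z\in X_T$, treat $f_R(z,\nabla z)$ and $g_R(z)$ as prescribed sources in the drift and diffusion of a linear stochastic heat equation, invoke a linear null-controllability result to produce a controlled trajectory that vanishes at time $T$ while remaining bounded in $X_T$, and close the loop by a fixed point.

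The linear input I would rely on is a null-controllability result with source terms: given $y_0\in L^2(\Omega,\fil_0;H_0^1(\dom))$ and $f_1,f_2\in L^2_{\fil}(0,T;L^2(\dom))$ with $f_i/\hat{\rho}\in L^2_{\fil}(0,T;L^2(\dom))$, there exists $h\in L^2_{\fil}(0,T;L^2(\dom))$ such that the solution of
\begin{equation*}
\d y=(\Delta y+\chi_{\dom_0}h+f_1)\dt+(ay+f_2)\,\d W(t),\quad y|_{\Gamma}=0,\quad y(0)=y_0,
\end{equation*}
satisfies $y(T,\cdot)=0$ a.s.\ together with
\begin{equation*}
\esp\|y\|_{X_T}^2\leq C\,\esp\left(\|y_0\|_{H_0^1}^2+\|f_1/\hat{\rho}\|_{L^2(0,T;L^2)}^2+\|f_2/\hat{\rho}\|_{L^2(0,T;L^2)}^2\right).
\end{equation*}
Such an estimate should follow from a stochastic Lebeau-Robbiano strategy (dyadic time decomposition, short-time observability of spectral projections with exponentially controlled cost, free evolution decay in between), the weight $\hat{\rho}$ being calibrated precisely to absorb both the control cost and the propagation of the sources.

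Granted this linear result, I define $\Lambda:X_T\to X_T$ by $\Lambda(z)=y$, where $y$ is the controlled trajectory produced by the linear result applied with $f_1=f_R(z,\nabla z)$ and $f_2=g_R(z)$. Thanks to the cutoff $\varphi_R$, whenever the nonlinearity does not vanish we have $\|z\|_{X_t}\leq 2R$; combined with the Sobolev embeddings $H^2(\dom)\hookrightarrow L^{\infty}(\dom)$ and $H^1(\dom)\hookrightarrow L^6(\dom)$ valid for $n\leq 3$ and the polynomial structure of $f$ and $g$, this yields bounds of the form
\begin{equation*}
\esp\|f_R(z,\nabla z)/\hat{\rho}\|_{L^2(0,T;L^2)}^2+\esp\|g_R(z)/\hat{\rho}\|_{L^2(0,T;L^2)}^2\leq C R^{2\theta}\,\esp\|z\|_{X_T}^2,
\end{equation*}
for some $\theta>0$ depending on $p,q,r$, together with analogous Lipschitz-in-$z$ estimates obtained via the Lipschitz character of $\varphi_R$ and mean-value arguments restricted to the region where the truncation is active. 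Choosing $R$ small enough makes $\Lambda$ a strict contraction on a closed ball of $X_T$ of radius comparable to $\esp\|y_0\|_{H_0^1}^2$, and its fixed point delivers a controlled solution of \eqref{eq:heat_seminlinear_R} together with \eqref{eq:estimatenonlinearityPropfR}.

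The main obstacle is the weighted estimate of the truncated nonlinearity and its Lipschitz counterpart. Since the truncation $\varphi_R(\|z\|_{X_t})$ is itself built from a norm weighted by $\hat{\rho}$, and since stochastic parabolic solutions lack pointwise bounds in $\omega\in\Omega$, one cannot apply pathwise Sobolev embeddings and simply take expectations. Instead, one must work directly at the level of $L^2(\Omega)$ and split each contribution so that $\hat{\rho}$ simultaneously compensates the control cost and the polynomial growth of the nonlinearity. The exponent restrictions $p\in(1,3]$ in dimension $n=3$ and $g\equiv 0$ for $n\geq 2$ reflect exactly the sharp threshold of these embeddings in the absence of higher probabilistic integrability, which is why they appear in the statement.
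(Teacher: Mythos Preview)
Your overall strategy---linear null-controllability with source terms followed by a Banach fixed point exploiting the truncation---is exactly the one the paper uses. However, the linear input you state is not correct as written, and this is the genuine gap.

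You propose a single weight $\hat{\rho}$ appearing both in the definition of $X_T$ (the state) and on the source terms $f_1,f_2$ in the right-hand side of the linear estimate. The source term method cannot produce such an estimate: as $t\to T^-$ the control cost on each subinterval blows up like $\gamma(T_{k+1}-T_k)=Me^{M/(T_{k+1}-T_k)}$, and the energy injected by the source on $[T_k,T_{k+1}]$ must be damped by this factor before it can be compared to the state weight on the next interval. Concretely, the identity that makes the iteration close is $\rho_0(T_{k+2})=\rho(T_k)\gamma(T_{k+2}-T_{k+1})$, which forces the source weight $\rho$ to decay \emph{strictly faster} than the state weight $\rho_0$ (and than $\hat{\rho}$). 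The paper therefore works with two weights and the crucial relation $\hat{\rho}^{\,s}\leq C\rho$ with $s=\min(p,q+1,r)>1$; it is precisely this gap that lets the polynomial nonlinearity $f(y)\sim y^p$ map the $\hat{\rho}$-weighted state space back into the $\rho$-weighted source space. With a single weight your linear estimate is false (take any $F$ with $F/\hat\rho$ merely bounded and trace the control cost near $T$), so the contraction cannot be set up.

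Two smaller points. First, the maximal regularity estimate behind the linear step requires the diffusion source in $H^1(\dom)$, not $L^2(\dom)$: you need $\|G/\rho\|_{L^2(0,T;H^1)}$ on the right-hand side. This is exactly why a nonlinear $g$ is allowed only for $n=1$. Second, the contraction is on the whole space (in the paper, the source space $\mathcal{S}$), not on a ball of radius depending on $y_0$: the Lipschitz constant is $C^2R^{2(p-1)}$, independent of the initial datum, and $R$ is fixed once so that this is $<1$. Restricting to a $y_0$-dependent ball would be incompatible with the uniformity in $y_0$ claimed in the theorem.
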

\begin{theo}
\label{th:mainresult2}
Let $\epsilon >0$ and $T>0$ be given. Then, there exists $\delta=\delta(\epsilon)>0$ such that for every initial data $y_0 \in L^2(\Omega,\fil_0;H^1(\dom))$ verifying $\norme{y_0}_{L^2(\Omega,\fil_0;H_0^1(\dom))} \leq \delta$, there exists a control $h \in L^2_\fil(0,T;L^2(\dom_0))$ such that the solution $y$ of \eqref{eq:heat_seminlinear_R} satisfies $y(T, \cdot) = 0$ in $\dom$, a.s. and
\begin{equation}
\label{eq:probafRf}
\mathbb{P} \bigg( \left\{ f_R(y, \nabla y) = f(y, \nabla y) \right\} \cap   \left\{ g_R(y) = g(y) \right\} \bigg) \geq 1 - \varepsilon.
\end{equation}
\end{theo}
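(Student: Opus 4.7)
The plan is to derive \Cref{th:mainresult2} as a direct consequence of \Cref{th:mainresult1} via a Markov-type inequality applied to the quantitative estimate \eqref{eq:estimatenonlinearityPropfR}. The point is that \Cref{th:mainresult1} already produces, for \emph{any} initial datum, a control steering the truncated system \eqref{eq:heat_seminlinear_R} to zero with a quantitative control on $\esp(\|y\|_{X_T}^2)$. The nonlinearity $f_R$ coincides with $f$ (and likewise $g_R$ with $g$) exactly on the event where the truncation factor $\varphi_R(\|y\|_{X_t})$ equals $1$ for every $t \in [0,T]$; by the cutoff \eqref{eq:defvarphiR} this holds as soon as $\|y\|_{X_t}\leq R$ for all $t\in [0,T]$. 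Crucially, the map $t \mapsto \|y\|_{X_t}$ is non-decreasing (directly from the definition \eqref{eq:defXt}), so the inclusion $\{\|y\|_{X_T} \leq R\} \subset \{\forall t \in [0,T],\ \varphi_R(\|y\|_{X_t}) = 1\}$ holds pathwise.

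First I would fix the constant $R>0$ provided by \Cref{th:mainresult1} (depending only on $T, \dom, \dom_0, a, \alpha, \beta, p, q$) and, given $\epsilon > 0$, set
\begin{equation*}
\delta := \frac{R\sqrt{\epsilon}}{C},
\end{equation*}
where $C$ is the constant appearing in \eqref{eq:estimatenonlinearityPropfR}. For any $y_0 \in L^2(\Omega,\fil_0;H_0^1(\dom))$ with $\norme{y_0}_{L^2(\Omega,\fil_0;H_0^1(\dom))} \leq \delta$, \Cref{th:mainresult1} furnishes a control $h \in L^2_\fil(0,T;L^2(\dom))$ such that the associated solution $y$ of \eqref{eq:heat_seminlinear_R} satisfies $y(T,\cdot)=0$ a.s.\ together with
\begin{equation*}
\esp\bigl(\norme{y}_{X_T}^2\bigr) \leq C^2 \esp\bigl(\norme{y_0}_{H_0^1(\dom)}^2\bigr) \leq C^2 \delta^2 = R^2 \epsilon.
\end{equation*}

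Next, the Markov (Chebyshev) inequality applied to the non-negative random variable $\norme{y}_{X_T}$ yields
\begin{equation*}
\mathbb{P}\bigl(\norme{y}_{X_T} > R\bigr) \leq \frac{\esp(\norme{y}_{X_T}^2)}{R^2} \leq \epsilon,
\end{equation*}
so that $\mathbb{P}(\norme{y}_{X_T} \leq R) \geq 1 - \epsilon$. By the monotonicity of $t \mapsto \norme{y}_{X_t}$ discussed above and the definition of $f_R$, $g_R$ in \eqref{eq:deffR}--\eqref{eq:defggR}, on the event $\{\norme{y}_{X_T} \leq R\}$ we have $\varphi_R(\norme{y}_{X_t}) = 1$ for every $t \in [0,T]$, hence $f_R(y,\nabla y) = f(y,\nabla y)$ and $g_R(y) = g(y)$ pointwise on $(0,T)\times \dom$. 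This yields \eqref{eq:probafRf}. I do not expect any substantial technical obstacle here: all the delicate analysis (truncation, global controllability, source term method) has already been concentrated in \Cref{th:mainresult1}, and the only conceptual point to check is the monotonicity of $t\mapsto \norme{y}_{X_t}$ that allows transferring the terminal-time bound $\norme{y}_{X_T}\leq R$ into an inactivity statement for $\varphi_R$ along the whole trajectory.
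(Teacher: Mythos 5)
Your proposal is correct and follows essentially the same route as the paper: fix the $R$ from \Cref{th:mainresult1}, choose $\delta$ so that $C^2\delta^2/R^2\leq\epsilon$, apply Markov's inequality to $\norme{y}_{X_T}^2$ using \eqref{eq:estimatenonlinearityPropfR}, and conclude via the monotonicity $\sup_{t\in[0,T]}\norme{\cdot}_{X_t}=\norme{\cdot}_{X_T}$. Your explicit justification of why $\{\norme{y}_{X_T}\leq R\}$ forces $\varphi_R(\norme{y}_{X_t})=1$ for all $t$ is a point the paper only states as "easily deduced," so no issues here.
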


Before continuing, let us make some comments on \Cref{th:mainresult1} and \Cref{th:mainresult2}.
\begin{itemize}
\item \Cref{th:mainresult1} is a small-time global null-controllability result for the equation \eqref{eq:heat_seminlinear_R} which corresponds to \eqref{eq:heat_seminlinear}, with  truncated semilinearities $f_R$ and $g_R$. Remark that the parameter of truncation $R$ is taken sufficiently small then if the solution $y$ of \eqref{eq:heat_seminlinear_R} is too big in the space $X_T$, $f_R$ and $g_R$ vanish.

\item \Cref{th:mainresult2} is a “small-time \textit{statistical} local null-controllability” for the equation \eqref{eq:heat_seminlinear}. Indeed, we justify this new terminology as follows. Given any small time $T>0$ and a small constant $\epsilon>0$, we can find a ball of size $\delta>0$ such that, with a confidence level $1-\epsilon$, we can steer any initial data smaller than $\delta$ for system \eqref{eq:heat_seminlinear} to zero. One could compare \Cref{th:mainresult2} to the results obtained in \cite[Theorem 4.6]{GHV14} about global existence for stochastic Euler equations in the three dimensional case. Indeed, they prove that for every $\varepsilon >0$ and any given deterministic initial condition, the probability that particular solutions  never blow up is bigger than $1- \varepsilon$.

\item We may wonder if local null-controllability holds for \eqref{eq:heat_seminlinear}, i.e.  if there exists $\delta >0$ such that for every initial data $y_0 \in L^2(\Omega,\fil_0;H^1(\dom))$, $\norme{y_0}_{L^2(\Omega,\fil_0;H_0^1(\dom))} \leq \delta$, one can find a control $h \in L^2_\fil(0,T;L^2(\dom_0))$ such that the solution $y \in L_{\mathcal{F}}^2(\Omega; C([0,T];L^2(\dom)))$ of \eqref{eq:heat_seminlinear} satisfies $y(T, \cdot) = 0$ a.s. This is an interesting open question and new ideas have to be introduced in order to solve this problem.

\item Related to the comments above, we shall emphasize that by our method, the existence and uniqueness of the solution to the uncontrolled equation \eqref{eq:heat_seminlinear} is not a prerequisite for studying its controllability. Actually, it is well-known that without imposing any growth, sign condition, or monotonicity condition on the nonlinear terms, the solutions may not exist globally and blow-up in finite time might occur (see, e.g., the seminal work \cite{Par79}, the newer references \cite{Zha09,DKZ19}, and the references within for some results and remarks in this direction).
In turn, our method restricts to a truncated case and yields the existence of a solution in the weighted space $X_T$ which by construction implies the controllability constraint $y(T,\cdot)=0$ in $\dom$, a.s. 

\item Among the physical examples that our results cover, let us quote the stochastic Allen-Cahn equation with $f(y) = y - y^3$, up to the change of variable $y \leftarrow e^{t} y$ and the Burgers equation in the one dimensional case, i.e. $f(y, \partial_{x} y) = -y \partial_{x} y$. We refer to \cite{DPD99} where the optimal control of the stochastic Burgers equation is studied. Note also that the multiplicative noise term $g(y) \d W(t)$ can represent the existence of external perturbations or a lack of knowledge of certain physical parameters. Its importance is well-known in physics and biology, see for instance, \cite{KS10,MMQ11,WXZZ16,KY20}.

\item Let us mention an open problem that could be addressed in the future. Consider the stochastic Navier-Stokes equation for $n=2, 3$, 
\begin{equation}\label{eq:navier_stokes}
\begin{cases}
\d y = (\Delta y - y \cdot \nabla y  - \nabla p +  \chi_{\dom_0}h  )\dt+(a y)\d W(t) &\text{in } (0,T)\times\dom, \\
\text{div}\ y = 0&\text{in } (0,T)\times\dom, \\
y=0 &\text{on } (0,T)\times\Gamma, \\
y(0, \cdot)=y_0 &\text{in } \dom.
\end{cases}
\end{equation}
We may wonder if \eqref{eq:navier_stokes} is statistically locally null-controllable? To prove this type of result, a good strategy seems to first prove the null-controllability of the Stokes equation, with one control localized in the drift term, by combining the proofs in \cite{CSL16} and \cite{Lu11}. Then, one could adapt the method present in this paper to deal with the nonlinear term $y \cdot \nabla y$. Difficulties will appear by estimating this nonlinear term due to the fact that $H^1(\dom)$ does not embed in $L^{\infty}(\dom)$ for $n \geq 2$. Probably, one should work in $W^{1, p}(\dom)$, which embeds in $L^{\infty}(\dom)$ for $p>2$.
\end{itemize}

\subsection{Bibliographical comments}

In the deterministic setting, the (small-time) null-controllability of the heat equation has been proved independently in the seminal papers \cite{LR95} and \cite{fursi}. Both proofs rely on Carleman estimates. The local null-controllability of semilinear parabolic equations is also established in \cite[Chapter 4]{fursi} by a linearization argument. Then, in \cite{FCZ00} and \cite{Bar00}, the global null-controllability for slightly superlinear heat equations is obtained.

The study of null-controllability of stochastic linear heat equations was first performed in \cite{BRT03}. In particular, the authors remark that the null-controllability of forward equations was a challenging topic, this is why results have been established in different settings. In \cite{TZ09}, the authors prove the null-controllability of forward parabolic equations by introducing two controls, one localized in the drift term and another in the diffusion term. This result was obtained thanks to Carleman estimate for backward parabolic equations. Then, in \cite{Lu11}, the control in the diffusion term is removed, assuming that the coefficients of the parabolic operator do not depend on the spatial variable. The strategy of obtaining such a result relies on the Lebeau-Robbiano method \cite{LR95} adapted to the stochastic setting.

In the nonlinear setting, the result of null-controllability for semilinear parabolic equations was deemed as a difficult problem even for globally Lipschitz nonlinearities (see \cite[Remark 2.6]{TZ09}), due to the lack of compactness. Nonetheless, in our recent work \cite{HSLBP20} we have overcame this difficulty by presenting a new Carleman estimate and a Banach fixed point procedure, where compactness is not needed. In spite of this new result, the question of how to address the controllability for semilinear equations where the global condition for the nonlinearity is dropped is still open. Due to the fact that maximal regularity arguments for stochastic parabolic equations give only regularity in time and space but not in probability, the nonlinearity is difficult to estimate in suitable spaces in a fixed-point procedure. For this reason, in this paper, we study some local controllability properties.

\subsection{Strategy of proof}
In this part, we explain the proof of \Cref{th:mainresult2} that we split into different main steps. Note that \Cref{th:mainresult1} would be actually a byproduct of the proof of \Cref{th:mainresult2}.
\begin{itemize}
\item First, we linearize the equation \eqref{eq:heat_seminlinear} around $0$ to obtain a stochastic (linear) heat equation. By \cite[Theorem 1.1]{Lu11}, we know that this equation is small-time (globally) null-controllable. Moreover, by working a little bit more, we are able to prove that the cost of null-controllability in time $T>0$, denoted by $C_T$,  behaves as $C_T \leq \exp(C/T) >0 $ where the constant $C > 0$ depends on $\dom$, $\dom_0$ and $a$, see \Cref{sec:controllinear} below.
\item Secondly, we employ an adaptation of the well-known source term method of \cite{LLT13} to the stochastic setting in order to prove the null-controllability of the stochastic linear heat equation with a source term exponentially decreasing as $t \rightarrow T$, see \Cref{sec:stochasticsource} below. We remark that as a byproduct of this stochastic source term method is a new observability estimate for the backward heat equation, see \Cref{sec:byproductnewobs} below.
\item To conclude the proof of \Cref{th:mainresult2}, the application of a  Banach fixed point strategy is not straightforward. Indeed, maximal regularity arguments for stochastic parabolic equations give us only regularity in time and space, but not in probability. This leads to some trouble for estimating the semilinearity $f(y)$. This is why we first replace the semilinearity $f$ by the truncated nonlinearity $f_R$, defined in \eqref{eq:deffR}, for which we are able to perform a Banach fixed point argument for $R> 0$ sufficiently small. All of this actually leads to the proof of \Cref{th:mainresult1}. 
\item The conclusion of \Cref{th:mainresult2} will follow from \Cref{th:mainresult1} and Markov's inequality.
\end{itemize}

\section{Null-controllability result for the linearized stochastic heat equation}\label{sec:control_forward}

\subsection{An estimate of the control cost for the stochastic heat equation}
\label{sec:controllinear}

For a given positive time $\tau >0$, we introduce the notations $Q_{\tau}=\dom\times(0,\tau)$, $\Sigma_{\tau}=\Gamma\times(0,\tau)$.

We linearize \eqref{eq:heat_seminlinear} around $0$ and obtain 
\begin{equation}\label{eq:heat_linear}
\begin{cases}
\d y = (\Delta y + \chi_{\dom_0}h )\dt+(ay)\d W(t) &\text{in } Q_\tau, \\
y=0 &\text{on } \Sigma_\tau, \\
y(0, \cdot)=y_0 &\text{in } \dom.
\end{cases}
\end{equation}

We have the following result.
\begin{prop}\label{prop:controllinear}
For every $\tau >0$, $y_0\in L^2(\Omega,\fil_0;L^2(\dom))$, there exists $h\in L^2_\fil(0,\tau;L^2(\dom_0))$ such that 
$y(\tau)=0$ in $\dom$, a.s. Moreover, we have the following estimate
\begin{equation}
\label{eq:costlinear}
\esp\left(\iint_{\dom_0\times(0,\tau)}|h|^2\dx\dt \right) \leq C_\tau \esp\left(\|y_0\|^2_{L^2(\dom)}\right),
\end{equation}
where $C_\tau=Ce^{C/\tau}$ with a positive constant $C>0$ only depending on $\dom$, $\dom_0$ and $a$.
\end{prop}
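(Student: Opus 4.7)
\smallskip

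The existence of a null control $h$ satisfying $y(\tau,\cdot)=0$ a.s.\ is already contained in \cite[Theorem 1.1]{Lu11}; what requires work is the precise dependence of the cost on the final time $\tau$, in particular the blow-up rate $Ce^{C/\tau}$ as $\tau\to 0^+$. My plan is to revisit Lu's construction, which is a stochastic adaptation of the Lebeau--Robbiano strategy, and track the constants carefully through the iteration. The deterministic backbone is the spectral inequality for the Dirichlet Laplacian on $\dom$: there exists $C_0>0$ such that, for every $\lambda>0$ and every $u=\sum_{\mu_k\leq\lambda}c_k e_k$ built from the first eigenfunctions, $\|u\|_{L^2(\dom)}\leq C_0 e^{C_0\sqrt{\lambda}}\|u\|_{L^2(\dom_0)}$. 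Combined with the observability inequality for the \emph{backward} stochastic heat equation proved in \cite{Lu11} (or its adjoint version), this yields a \emph{partial} controllability statement: on any time interval $[t_1,t_2]$ one can find a control $h$ that steers the low-frequency projection $\pi_\lambda y(t_1)$ to $0$ at time $t_2$ while paying $\esp\|h\|_{L^2}^2 \leq e^{C_0\sqrt{\lambda}}\esp\|\pi_\lambda y(t_1)\|_{L^2}^2$, the constant in the exponent being independent of the interval length.

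Next I would complement this by the dissipation of the high-frequency part during a passive phase (i.e.\ on an interval where $h\equiv 0$). Setting $z=(I-\pi_\lambda)y$, Itô's formula for $\|z(t)\|_{L^2}^2$ applied to $\d z=\Delta z\,\dt + a z\,\d W$ gives
\begin{equation*}
\d\esp\|z\|_{L^2}^2 = \bigl(-2\esp\|\nabla z\|_{L^2}^2 + a^2\esp\|z\|_{L^2}^2\bigr)\dt \leq -(2\lambda - a^2)\esp\|z\|_{L^2}^2\dt,
\end{equation*}
so that $\esp\|z(t_2)\|_{L^2}^2 \leq e^{-(2\lambda - a^2)(t_2-t_1)}\esp\|z(t_1)\|_{L^2}^2$ whenever $\lambda\geq a^2$. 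This dissipation estimate is the key point where the multiplicative noise enters, but because $a$ is a constant it is harmless: the only effect is to replace $\lambda$ by $\lambda - a^2/2$ in the exponent.

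With these two ingredients at hand, I would then run the standard Lebeau--Robbiano iteration: decompose $[0,\tau]$ into a geometric sequence of active/passive intervals of lengths $\sim \tau 2^{-k}$, with frequency cutoffs $\lambda_k = M 2^{2k}$ where $M$ is chosen large enough (depending only on $C_0$ and $a$) so that the gain $e^{-\lambda_k\cdot\tau 2^{-k-1}}$ from the passive phase beats the cost $e^{C_0 2^{k+1}\sqrt{M}}$ of the next active phase. Summing the geometric series in expectation produces the desired bound $\esp\|h\|_{L^2(Q_\tau)}^2\leq Ce^{C/\tau}\esp\|y_0\|_{L^2}^2$, with a constant depending only on $\dom$, $\dom_0$ and $a$. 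The main obstacle I expect is to write the iteration cleanly in the stochastic setting: at each step the ``initial datum'' of the next block is an $\mathcal F_{t_k}$-measurable random variable rather than a deterministic one, so the partial controllability must be applied conditionally, and one must verify that the resulting concatenated control is $\mathcal F_t$-adapted and square integrable. This is handled by selecting the block control as the minimal-norm control (via HUM restricted to the low-frequency space), which depends measurably on the initial datum.
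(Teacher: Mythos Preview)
Your proposal is correct and follows essentially the same Lebeau--Robbiano strategy as the paper: partial controllability of low frequencies via the spectral/observability inequality, dissipation of high frequencies during a passive phase, and a geometric time decomposition with cutoffs $\lambda_k=M2^{2k}$ so that dissipation beats the control cost. The one noteworthy difference is in the passive-phase estimate: you use It\^o's formula on $\|(I-\pi_\lambda)y\|_{L^2}^2$ directly (which is Lu's original argument and works here because $a$ is constant, so the projection commutes with the noise), whereas the paper instead writes the mild formulation, applies a Burkholder--Davis--Gundy inequality to the stochastic convolution, and closes with Gronwall; the paper's route avoids relying on the invariance of the spectral subspaces under the noise and would extend more easily to $x$-dependent $a$, but for the present statement both arguments give the same $e^{-c\lambda T_k}$ decay. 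One small correction: the partial control cost is not independent of the interval length---it carries a factor $C/\tau$ in front of $e^{C\sqrt\lambda}$ (see the paper's Lemma corresponding to your partial controllability step)---but this is harmless since $1/T_k = 2^{k+2}/T$ is absorbed into the exponential bookkeeping.
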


This result is actually already known in the literature. It was established in a slightly different framework in \cite[Theorem 1.1]{Lu11} (see also \cite[Theorem 1.1]{LL18} for a more general case of coupled systems). Nonetheless, in such works, the control cost is not made explicit and in their current form the results are not suitable for our purposes. 

Below, we give a description of the main parts needed to achieve the proof of \Cref{prop:controllinear} and we pay special attention at the end in the dependence of $T$ for obtaining the constant $C_T$. For this reason, in what follows, we always assume that $T\in(0,1)$.

\begin{proof}[Proof of \Cref{prop:controllinear}]

 The proof is based on the classical Lebeau-Robbiano strategy introduced in \cite{LR95} and for the sake of presentation we follow the methodology in \cite[Section 3]{LeB18}. We split the proof in three main steps.

\textbf{Step 1: A controllability result for low frequencies}. We consider the unbounded linear operator in $L^2(\dom)$ given by $\left(-\Delta,H^2(\dom)\cap H_0^1(\dom)\right)$. Let $(\lambda_k)_{k\geq 1}$ and $(\phi_k)_{k\geq 1}$ be the corresponding eigenvalues and (normalized) eigenfunctions, i.e., $-\Delta \phi_k=\lambda_k\phi_k$ and $(\phi_k,\phi_{l})=\delta_{k,l}$. It is clear that $(\phi_k)_{k\geq 1}$ is an orthonormal basis of $L^2(\dom)$. For $\lambda>0$, we define the finite dimensional space $E_\lambda=\left\{\sum_{\lambda_k\leq \lambda}c_k\phi_k:c_k\in\R\right\}\subset L^2(\dom)$ and we denote by $\Pi_{E_\lambda}$ the orthogonal projection from $L^2(\dom)$ in $E_\lambda$.

The first part consists in obtaining an observability inequality for the adjoint system
\begin{equation}\label{eq:adj_gen_Bis}
\begin{cases}
\d z = -(\Delta z + \ov{z})\dt+\ov{z} \d{W(t)} &\text{in } Q_{\tau}, \\
z=0 &\text{on } \Sigma_{\tau}, \\
z(\tau, \cdot)=z_{\tau}\in E_{\lambda} &\text{in } \dom.
\end{cases}
\end{equation}
The result is the following.
\begin{lem}\label{prop:obs_finite}
There exists $C>0$ such that for every $\tau\in (0,T)$, $\lambda\geq \lambda_1$, and $z_\tau\in  L^2(\Omega,\mathcal F_{\tau};E_{\lambda})$, the solution $z$ to \eqref{eq:adj_gen_Bis} satisfies
\begin{equation*}
\esp\left(\norme{z(0)}^2
_{L^2(\dom)}\right)\leq \frac{C}{\tau}e^{C\sqrt{\lambda}}\esp\left(\iint_{\dom_0\times(0,\tau)}|z|^2\dx\dt\right).
\end{equation*}
\end{lem}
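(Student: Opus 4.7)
The plan is to exploit that the terminal datum lies in the finite-dimensional spectral subspace $E_\lambda$ and that $-\Delta$ preserves $E_\lambda$, so that the whole trajectory $z(\cdot)$ stays in $E_\lambda$. This reduces matters to the classical (deterministic) Lebeau--Robbiano spectral inequality applied pointwise in $(t,\omega)$, complemented by a stochastic backward energy estimate.

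\textbf{Step 1 (spectral confinement).} The conceptually key observation is that $z(t), \bar z(t) \in E_\lambda$ almost surely for every $t \in [0,\tau]$. I would justify this by noting that the orthogonal projector $\Pi_{E_\lambda}$ commutes with $-\Delta$ (as $E_\lambda$ is spanned by eigenfunctions of $-\Delta$), so that applying $I - \Pi_{E_\lambda}$ to \eqref{eq:adj_gen_Bis} shows that the pair $\bigl((I-\Pi_{E_\lambda})z, (I-\Pi_{E_\lambda})\bar z\bigr)$ solves the same backward stochastic heat equation with \emph{zero} terminal datum (using $z_\tau \in E_\lambda$). Well-posedness of linear backward stochastic parabolic equations then forces this pair to vanish identically.

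\textbf{Step 2 (spectral inequality and backward energy).} Having Step 1, I would invoke the classical Lebeau--Robbiano spectral inequality
$$\norme{u}_{L^2(\dom)} \leq C\, e^{C\sqrt{\lambda}}\, \norme{u}_{L^2(\dom_0)}, \quad u \in E_\lambda,$$
applied pointwise in $(t,\omega)$ to $z(t,\omega)$; squaring, taking expectation and integrating in $t$ yields
$$\int_0^\tau \esp \norme{z(t)}^2_{L^2(\dom)}\, \dt \;\leq\; C^2\, e^{2C\sqrt{\lambda}}\, \esp \iint_{\dom_0 \times (0,\tau)} |z|^2\, \dx \dt.$$
In parallel, It\^o's formula applied to $\norme{z}^2_{L^2(\dom)}$, using $\psc{z}{-\Delta z}_{L^2} = \norme{\nabla z}^2_{L^2}$ with the Dirichlet boundary condition, gives
$$\d\, \norme{z}^2 = \bigl(2\norme{\nabla z}^2 - 2\psc{z}{\bar z} + \norme{\bar z}^2\bigr)\, \dt + 2\psc{z}{\bar z}\, \d W(t).$$
Since $\norme{\bar z}^2 - 2\psc{z}{\bar z} = \norme{\bar z - z}^2 - \norme{z}^2 \geq -\norme{z}^2$, the function $E(t) := \esp \norme{z(t)}^2_{L^2(\dom)}$ satisfies $E'(t) \geq -E(t)$. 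By Gronwall and the standing assumption $\tau \leq T < 1$, $E(0) \leq e^\tau E(t) \leq e\, E(t)$; integrating this over $t \in [0,\tau]$ yields $\tau E(0) \leq e \int_0^\tau E(t)\, \dt$, which combined with the previous display closes the proof.

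The only genuinely delicate ingredient is Step 1, the spectral confinement, which relies on a uniqueness result for linear backward stochastic parabolic equations (standard in the literature). The remaining pieces---the deterministic Lebeau--Robbiano inequality and the It\^o energy computation---are routine, and the factor $1/\tau$ appears naturally from integrating the backward energy estimate over the observation window.
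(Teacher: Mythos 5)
Your argument is correct and is essentially the proof of \cite[Proposition 2.1]{Lu11} to which the paper defers (the paper itself omits the proof): spectral confinement of the trajectory in $E_\lambda$ via uniqueness for the backward stochastic heat equation, the deterministic Lebeau--Robbiano spectral inequality applied pointwise in $(t,\omega)$, and a backward It\^o energy estimate whose averaging over $[0,\tau]$ produces the $1/\tau$ factor. The only cosmetic caveats are that the differential inequality $E'(t)\geq -E(t)$ should be read in integrated (Gronwall) form and that the bound $e^{\tau}\leq e$ uses the standing normalization $T<1$ adopted at the start of the proof of Proposition~\ref{prop:controllinear}, both of which you handle correctly.
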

This result can be proved as in \cite[Proposition 2.1]{Lu11} with only minor modifications, so we omit it. By means of a classical duality argument, \Cref{prop:obs_finite} yields a partial controllability result for the forward system
\begin{equation}\label{eq:app_forward}
\begin{cases}
\d y = (\Delta y + h)\dt+ y\, \d{W(t)} &\text{in } Q_{\tau}, \\
y=0 &\text{on } \Sigma_{\tau}, \\
y(0, \cdot)=y_0 &\text{in } \dom.
\end{cases}
\end{equation}

\begin{lem}\label{prop:app_control_partial}
There exist constants $C,C_2>0$ such that for every $\tau\in(0,T)$, $\lambda\geq \lambda_1$, $y_0\in  L^2(\Omega,\mathcal F_{\tau};L^2(\dom))$, there exists a control $h_\lambda\in L^2_{\mathcal F}(0,T;L^2(\dom_0))$ verifying 
\begin{equation}\label{eq:app_cost_control}
\|h_\lambda\|_{L^2_{\mathcal F}(0,\tau;L^2(\dom_0))}\leq \frac{C}{\tau}e^{C\sqrt{\lambda}}\esp\left(\norme{y_0}^2_{L^2(\dom)}\right),
\end{equation}
such that the corresponding controlled solution $y$ to \eqref{eq:app_forward} satisfies
\begin{equation*}
\Pi_{\lambda}(y(\tau))=0, \quad\textnormal{in $\dom$, \ a.s.,}
\end{equation*}
and 
\begin{equation*}
\esp\left(\norme{y(\tau)}^2_{L^2(\dom)}\right)\leq \left(C_2+\frac{C_2}{\tau}e^{C_2\sqrt{\lambda}}\right)\esp\left(\norme{y_0}^2_{L^2(\dom)}\right).
\end{equation*}
\end{lem}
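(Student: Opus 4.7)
The plan is to run the standard Hilbert Uniqueness Method (HUM) in the low–frequency subspace $E_\lambda$. Since Lemma \ref{prop:obs_finite} gives an observability inequality restricted to terminal data $z_\tau \in L^2(\Omega,\mathcal F_\tau;E_\lambda)$, duality should yield controllability of the projection $\Pi_{E_\lambda}(y(\tau))$ with a cost matching the observability constant.

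First I would record the duality identity. For any $z_\tau \in L^2(\Omega,\mathcal F_\tau;E_\lambda)$ with associated adjoint solution $z$ of \eqref{eq:adj_gen_Bis}, and any $h \in L^2_{\mathcal F}(0,\tau;L^2(\dom_0))$ producing the controlled state $y$ of \eqref{eq:app_forward}, an application of It\^o's formula to $(y,z)_{L^2(\dom)}$, followed by integration by parts on the Laplacian term, yields
\begin{equation*}
\esp(y(\tau),z_\tau)_{L^2(\dom)} - \esp(y_0,z(0))_{L^2(\dom)} = \esp\iint_{\dom_0\times(0,\tau)} h\, z\, \dx\dt.
\end{equation*}
I would then introduce the quadratic functional $J_\lambda : L^2(\Omega,\mathcal F_\tau;E_\lambda) \to \R$,
\begin{equation*}
J_\lambda(z_\tau) = \tfrac12\, \esp\iint_{\dom_0\times(0,\tau)} |z|^2 \dx\dt + \esp(y_0, z(0))_{L^2(\dom)}.
\end{equation*}
The observability inequality from Lemma \ref{prop:obs_finite} gives at once the coercivity bound $J_\lambda(z_\tau) \geq \tfrac12\, A^2 - \bigl(\tfrac{C}{\tau}\bigr)^{1/2} e^{\frac{C}{2}\sqrt{\lambda}}\,\esp(\|y_0\|^2)^{1/2} A$ with $A := (\esp\iint |z|^2)^{1/2}$, while strict convexity and continuity are immediate. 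Hence $J_\lambda$ admits a unique minimizer $\hat z_\tau$.

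The Euler–Lagrange equation at the minimizer reads
\begin{equation*}
\esp\iint_{\dom_0\times(0,\tau)} \hat z\, \zeta\, \dx\dt + \esp(y_0,\zeta(0))_{L^2(\dom)} = 0 \quad\text{for all } \zeta_\tau \in L^2(\Omega,\mathcal F_\tau;E_\lambda).
\end{equation*}
Setting $h_\lambda := \chi_{\dom_0} \hat z$ and plugging this control into the duality identity, I obtain $\esp(y(\tau),\zeta_\tau)_{L^2(\dom)} = 0$ for every $\zeta_\tau \in L^2(\Omega,\mathcal F_\tau;E_\lambda)$. Testing against $\zeta_\tau = \Pi_{E_\lambda}(y(\tau))$ forces $\Pi_{E_\lambda}(y(\tau)) = 0$ a.s. For the cost, combining $J_\lambda(\hat z_\tau) \leq J_\lambda(0) = 0$ with Cauchy–Schwarz and the observability bound applied to $\hat z$ yields, after absorption,
\begin{equation*}
\|h_\lambda\|^2_{L^2_{\mathcal F}(0,\tau;L^2(\dom_0))} = \esp\iint_{\dom_0\times(0,\tau)} |\hat z|^2 \dx\dt \leq \frac{C}{\tau}\,e^{C\sqrt{\lambda}}\, \esp\|y_0\|^2_{L^2(\dom)},
\end{equation*}
which is the required estimate \eqref{eq:app_cost_control}.

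Finally, for the terminal state bound, I would apply It\^o's formula to $\|y(t)\|^2_{L^2(\dom)}$ along \eqref{eq:app_forward}. Taking expectation kills the martingale term, integration by parts absorbs the dissipative $-2\esp\|\nabla y\|^2$ contribution, and the quadratic variation produces an additional $\esp\|y\|^2$; bounding the source cross term by $2\esp(y,h_\lambda) \leq \esp\|y\|^2 + \esp\|h_\lambda\|^2$ leads to the differential inequality
\begin{equation*}
\frac{d}{dt}\esp\|y\|^2_{L^2(\dom)} \leq 2\,\esp\|y\|^2_{L^2(\dom)} + \esp\|h_\lambda\|^2_{L^2(\dom_0)}.
\end{equation*}
Gronwall's lemma, together with $\tau \leq T < 1$ and the cost estimate just obtained, gives $\esp\|y(\tau)\|^2 \leq (C_2 + \tfrac{C_2}{\tau}e^{C_2\sqrt{\lambda}})\,\esp\|y_0\|^2$, as claimed. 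No genuine obstacle stands in the way: the only mildly delicate point is the It\^o-based duality identity between the forward SPDE and the backward equation \eqref{eq:adj_gen_Bis} (handling the quadratic covariation produced by the $y\,\d W$ and $\bar z\,\d W$ terms), but this is a routine computation since the backward equation is posed on the finite-dimensional, and hence smooth, subspace $E_\lambda$.
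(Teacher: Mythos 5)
Your overall strategy is the right one and coincides with the proof the paper is implicitly relying on: the paper skips this argument and refers to \cite[Proposition 2.2]{Lu11}, whose proof is exactly the duality/variational (HUM) scheme you describe --- minimize a quadratic functional over terminal data in $L^2(\Omega,\mathcal F_\tau;E_\lambda)$, read the control off the Euler--Lagrange equation, and extract the cost from $J_\lambda(\hat z_\tau)\leq J_\lambda(0)=0$ combined with the observability inequality of \Cref{prop:obs_finite}. Your duality identity is correct (the quadratic covariation term $(y,\ov{z})\,\dt$ cancels the $-\ov{z}$ in the drift of \eqref{eq:adj_gen_Bis}, which is precisely why that term is there), the identification $\Pi_{E_\lambda}(y(\tau))=0$ from testing against $\zeta_\tau=\Pi_{E_\lambda}(y(\tau))$ is fine, and the concluding It\^o--Gronwall bound on $\esp\left(\norme{y(\tau)}^2_{L^2(\dom)}\right)$ is correct.

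The one step you assert too quickly is the existence of the minimizer $\hat z_\tau$. The lower bound you derive is coercive only with respect to the observation seminorm $A=\left(\esp\iint_{\dom_0\times(0,\tau)}|z|^2\dx\dt\right)^{1/2}$, not with respect to $\norme{z_\tau}_{L^2(\Omega,\mathcal F_\tau;E_\lambda)}$, and strict convexity requires injectivity of the map $z_\tau\mapsto z\restrict_{\dom_0\times(0,\tau)}$, which is a unique-continuation statement rather than an immediate fact; note that even though $E_\lambda$ is finite-dimensional in space, the space $L^2(\Omega,\mathcal F_\tau;E_\lambda)$ is infinite-dimensional, so the direct method genuinely needs norm-coercivity. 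The standard repair, and the one used in the cited reference, is to minimize the penalized functional $J_\lambda^{\epsilon}(z_\tau)=J_\lambda(z_\tau)+\epsilon\left(\esp\norme{z_\tau}_{L^2(\dom)}^2\right)^{1/2}$ for $\epsilon>0$, which is norm-coercive and strictly convex; the associated controls $h^{\epsilon}=\chi_{\dom_0}\hat z^{\epsilon}$ obey the same uniform bound \eqref{eq:app_cost_control} and satisfy $\left(\esp\norme{\Pi_{E_\lambda}(y^{\epsilon}(\tau))}^2_{L^2(\dom)}\right)^{1/2}\leq\epsilon$, after which a weak limit as $\epsilon\to0$ gives the exact statement. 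With that modification your argument is complete. A cosmetic remark: you bound $\norme{h_\lambda}^2$, whereas \eqref{eq:app_cost_control} as printed has $\norme{h_\lambda}$ on the left; the squared version is the one consistent with how the lemma is used in Step 3 of the proof of \Cref{prop:controllinear}, so your form is the intended one.
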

As in the previous case, \Cref{prop:app_control_partial} can be proved by following \cite[Proposition 2.2]{Lu11} with a few minor adjustments, so we skip the proof. 

 \textbf{Step 2: The Lebeau-Robbiano iterative method}. The second part of the method relies on a time-splitting iterative procedure (see e.g. \cite[Section 6.2]{LeRL12}). Here, we will argue slightly different as compared to  \cite[Section 3]{Lu11} which will allow us to track in a simple way the dependency of the constants with respect to $T$. In the remainder of this section, the constants $C$, $C^\prime$, $C_2$, \ldots, are independent of $T$ and may vary from line to line. 

We split the time interval $[0,T]=\bigcup_{k\in\mathbb N}[a_k,a_{k+1}]$ where $a_k$ is defined recursevely, i.e., $a_0=0$ and $a_{k+1}=a_{k}+2T_k$, where $T_k=T/2^{k+2}$ with $k\in\mathbb N$. Also, for some constant $M>0$ sufficiently large (which will be fixed later on), we define $\mu_k=M2^{2k}$. 

The control strategy can be roughly described as:
\begin{itemize}
\item \textit{Active period.} If $t\in (a_k,a_k+T_k)$, we take the control $h_\lambda$ and the corresponding controlled solution $y$ to \eqref{eq:app_forward} provided by \Cref{prop:app_control_partial} where we select $\lambda=\mu_k$.
\item \textit{Passive period.} If $t\in(a_k+T_k,a_{k+1})$, we set $h\equiv 0$ and use the dissipation properties of the system. 
\end{itemize}

In more detail, during the active period, we take $\lambda=\mu_k$ and by \Cref{prop:app_control_partial} we know that there exists $h_k:=h_{\mu_k}$ such that
\begin{align}
\esp\left(\norme{y(a_k+T_k)}^2_{L^2(\dom)}\right)&\leq \left(C_2+\frac{C_2}{T_k}e^{C_2\sqrt{M}2^k}\right)\esp\left(\norme{y(a_k)}^2_{L^2(\dom)}\right) \notag \\ \label{eq:est_C2}
&\leq \frac{C_2}{T}e^{C_2\sqrt{M}2^k}\esp\left(\norme{y(a_k)}^2_{L^2(\dom)}\right)
\end{align}
and 
\begin{equation}\label{eq:proy_k}
\Pi_{\mu_k}\left(y\left(a_k+T_k\right)\right)=0, \quad a.s.
\end{equation}
In the passive period of control, we will prove that the solution decays exponentially and will provide a suitable bound with an explicit dependency of $T$. This point is different from \cite{Lu11}, where It\^{o}'s formula and a direct computation is performed. Instead, we will use the properties of the heat semigroup $S(t):=e^{t\Delta}$, Burkholder-Davis-Gundy and Gronwall's inequality to deduce the required inequality. 

More precisely, for $t\in (a_k+T_k,a_{k+1})$, $h(t) \equiv 0$, so the solution to \eqref{eq:app_forward} writes as
\begin{equation*}
y(t)=S(t-{a_k}-T_k)y(a_k+T_k)+\int_{a_{k}+T_k}^{t}S(t-s)y(s)\d{W}(s), \quad a.s.
\end{equation*}
Then taking the $L^2$-norm and expectation on both sides, we get 
\begin{align}\notag
&\esp\left(\norme{y(t)}^2_{L^2(\dom)}\right)\\ \notag
&\leq C\esp\left(\norme{S(t-a_{k}-T_k)y(a_k+T_k)}_{L^2(\dom)}^2\right)+C\esp\left(\norme{\int_{a_k+T_k}^{t}S(a_{k+1}-s)y(s)\d{W}(s)}_{L^2(\dom)}^2\right) \\ \label{eq:est_yakm1}
&=: I_1+I_2.
\end{align}
We proceed to estimate $I_1$ and $I_2$. For the first term, using that $\norme{S(t)\psi}_{L^2(\dom)}\leq e^{-\mu t}\norme{\psi}_{L^2(\dom)}$ for all $\psi\in L^2(\dom)$ such that $\Pi_{\mu}(\psi)=0$, we have from \eqref{eq:proy_k}, that
\begin{equation}\label{eq:app_est_I1}
I_1 \leq Ce^{-2\mu_{k}(t-a_k-T_k)}\esp\left(\norme{y(a_k+T_k)}^2_{L^2(\dom)}\right).
\end{equation}
For the second one, using a Burkholder-Davis-Gundy type inequality (see e.g. \cite[Thm. 6.1.2]{LR15}) and the fact that $\norme{S(t)}_{\mathcal L(L^2(\dom))}\leq C$, we obtain
\begin{align}\notag 
I_2&\leq C\esp\left(\sup_{\tau \in[a_k+T_k,t]}\norme{\int_{a_{k+T_k}}^{\tau}S(\tau-s)y(s)\d{W}(s)}_{L^2(\dom)}^2\right) \\ \label{eq:app_est_I2}
&\leq C\int_{a_{k+T_k}}^{t}\esp\left(\norme{S(a_{k+1}-s)y(s)}_{L^2(\dom)}^2\right)\d{s} \leq C\int_{a_{k+T_k}}^{t}\esp\left(\norme{y(s)}_{L^2(\dom)}^2\right)\d{s}.
\end{align}

Hence, using estimates \eqref{eq:app_est_I1}--\eqref{eq:app_est_I2} in \eqref{eq:est_yakm1} and employing Gronwall inequality, we deduce
\begin{align*}
\esp\left(\norme{y(t)}^2_{L^2(\dom)}\right)\leq  Ce^{-2\mu_{k}(t-a_k-T_k)}\esp\left(\norme{y(a_k+T_k)}^2_{L^2(\dom)}\right)(1+e^{C(t-a_k-T_k)}).
\end{align*}
Thus, particularizing the previous estimate with $t=a_{k+1}$, the identity $a_{k+1}=a_{k}+2T_k$ and taking into account that $T\in(0,1)$, we deduce the existence of a constant $C^\prime>0$ independent of $T$ such that
\begin{equation}\label{eq:est_C_prime}
\esp\left(\norme{y(a_{k+1})}^2_{L^2(\dom)}\right)\leq C^\prime e^{-C^\prime M 2^{2k+1}T_k}\esp\left(\norme{y(a_k+T_k)}^2_{L^2(\dom)}\right).
\end{equation}
Noting that $2^{2k+1}T_k=2^kT/2$, we obtain from \eqref{eq:est_C_prime} and \eqref{eq:est_C2}
\begin{equation*}
\esp\left(\norme{y(a_{k+1})}^2_{L^2(\dom)}\right)\leq C^\prime \frac{C_2}{T} e^{-C^\prime M2^k T+C_2\sqrt{M}2^k}\esp\left(\norme{y(a_k)}^2_{L^2(\dom)}\right)
\end{equation*}
whence 
\begin{align}
\esp\left(\norme{y(a_{k+1})}^2_{L^2(\dom)}\right)&\leq \left(\frac{C_2}{T}\right)^{k+1}e^{\sum_{j=0}^k\left(-C^\prime M2^k T+C_2\sqrt{M}2^k\right)}\esp\left(\norme{y_0}^2_{L^2(\dom)}\right) \notag \\ \label{eq:est_final_akm1}
&\leq e^{C_2/T+(C_2\sqrt{M}-C^\prime M T)2^{k+1}}\esp\left(\norme{y_0}^2_{L^2(\dom)}\right).
\end{align}
Once again, taking $M>0$ large enough such that $C_2\sqrt{M}-C^\prime MT<0$ (for instance $M\geq 2(C_2/C^\prime T)^2$), we deduce from \eqref{eq:est_final_akm1} that $\lim_{k\to +\infty}\esp\left(\norme{y(a_k)}_{L^2(\dom)}\right)=0$, which together with \eqref{eq:proy_k} implies $y(T)=0$ in $\dom$, a.s. 

\textbf{Step 3: Conclusion.} We define the control $h$ by gluing all the controls $(h_k)_{k\in\mathbb N}$. Notice that this control is an element of $L^2_{\mathcal F}(0,T;L^2(\dom_0))$. Moreover, we have $$\norme{h}_{L^2_{\mathcal F}(0,T;L^2(\dom_0))}^2=\sum_{k=0}^{+\infty}\norme{h_k}^2_{L^2_{\mathcal F}(a_{k},a_{k}+T_k;L^2(\dom_0))}.$$ Using the estimate on the control \eqref{eq:app_cost_control} on each subinterval $(a_k,a_k+T_k)$ together with \eqref{eq:est_final_akm1}, we get
\begin{equation*}
\norme{h_k}^2_{L^2_{\mathcal F}(a_{k},a_{k}+T_k;L^2(\dom_0))}\leq \frac{C}{T_k}e^{C\sqrt{M}2^k}e^{C/T+(C_2\sqrt{M}-C^\prime M T)2^{k}}\esp\left(\norme{y_0}^2_{L^2(\dom)}\right)
\end{equation*}
for $k\geq 1$ and 
\begin{equation*}
\norme{h_0}^2_{L^2_{\mathcal F}(0,T_0;L^2(\dom_0))} \leq \frac{C}{T_0}e^{C\sqrt{M}}\esp\left(\norme{y_0}^2_{L^2(\dom)}\right).
\end{equation*}
Therefore, using the three above estimates and recalling the definition of $T_k$, we obtain
\begin{equation*}
\norme{h}^2_{L^2_{\mathcal F}(0,T;L^2(\dom_0))}\leq \left(CT^{-1}e^{C\sqrt{M}}+\sum_{k\geq 1}C2^kT^{-1}e^{C/T}e^{(C_2\sqrt{M}-C^\prime M T)2^k}\right)\esp\left(\norme{y_0}^2_{L^2(\dom)}\right).
\end{equation*}
Taking $M$ large enough such that $C_2\sqrt{M}-C^\prime MT/2=-C^{\prime\prime}/T$, with $C^{\prime\prime}>0$, we obtain the the above expression that
\begin{equation*}
\norme{h}^2_{L^2_{\mathcal F}(0,T;L^2(\dom_0))}\leq Ce^{C/T}\int_{0}^{+\infty}\frac{\sigma}{T}e^{-C^{\prime\prime}\frac{\sigma}{T}}\d{\sigma}\,\esp\left(\norme{y_0}^2_{L^2(\dom)}\right)
\leq C {e^{C/T}}\esp\left(\norme{y_0}^2_{L^2(\dom)}\right)
\end{equation*}
which yields the desired result \eqref{eq:costlinear}.
\end{proof}

\subsection{Stochastic source term method}
\label{sec:stochasticsource}

From \Cref{prop:controllinear}, we have an estimate for the control cost $C_{T}$ of the equation \eqref{eq:heat_linear} where $C_{T}=C e^{C/T}$ is defined in \eqref{eq:costlinear}. Then we fix $M>0$ such that $C_{T} \leq M e^{M/T}$ and we introduce the weight
\begin{equation}
\label{eq:defgamma}
\forall t >0,\ \gamma(t) = M e^{M/t}.
\end{equation}
We introduce the notation
\begin{equation*}
s = \min(p,q+1,r) >1,
\end{equation*}
where $p$ and $q$ are defined in \eqref{eq:hypf} and $r$ is defined in \eqref{eq:hypg}.

Let $Q \in (1, \sqrt[s]{2})$ and $P > Q^{s}/(2-Q^{s})$. We define the weights
\begin{align}
\label{eq:rho0}
\forall t \in [0,T),\ \rho_0(t) &:= M^{-P} \exp\left(- \frac{MP}{({Q^{s/2}}-1)(T-t)}\right),\\
\label{eq:rho}
\forall t \in [0,T),\ \rho(t) &:= M^{-1-P} \exp\left(-\frac{(1+P)Q^{s}M}{({Q^{s/2}}-1)(T-t)}\right).
\end{align}

For appropriate source terms $F,G\in L^2_{\fil}(0,T;L^2(\dom))$, we consider
\begin{equation}\label{eq:heat_source}
\begin{cases}
\d y = (\Delta y + \chi_{\dom_0}h + F)\dt+(ay+G)\d W(t) &\text{in } Q_T, \\
y=0 &\text{on } \Sigma_T, \\
y(0, \cdot)=y_0 &\text{in } \dom.
\end{cases}
\end{equation}

We define associated spaces for the source term, the state and the control
\begin{align*}
& \mathcal{S} := \left\{S \in  L^2_{\fil}(0,T;L^2(\dom)) : \frac{S}{\rho} \in  L^2_{\fil}(0,T;L^2(\dom))\right\},\\
& \mathcal{Y}:= \left\{y \in L^2_{\fil}(0,T;L^2(\dom)) : \frac{y}{\rho_0} \in L^2_{\fil}(0,T;L^2(\dom))\right\},\\
& \mathcal{H} := \left\{h \in L^2_{\fil}(0,T;L^2(\dom)) : \frac{h}{\rho_{0}}  \in L^2_{\fil}(0,T;L^2(\dom))\right\}.
\end{align*}
From the behaviors near $t=T$ of $\rho$ and $\rho_0$, we deduce that each element of $\mathcal{S}$, $\mathcal{Y}$, $\mathcal{H}$ vanishes at $t=T$.

We have the following result.
\begin{prop}\label{prop:source_stochastic}
For every $y_0 \in L^2(\Omega,\fil_0;L^2(\dom))$ and $F,G \in \mathcal{S}$, 
there exists a control $h \in \mathcal{H}$ such that the corresponding controlled solution $y$ to \eqref{eq:heat_linear} belongs to $\mathcal{Y}$. Moreover, there exists a positive constant $C>0$ depending only on $T$, $\dom$, $\dom_0$, $a$, $p$, $q$ and $r$ such that
\begin{align}\notag 
\esp & \left(\sup_{0\leq t\leq T }\left\|\frac{y(t)}{\rho_0(t)}\right\|^2\right)+\esp\left(\int_{0}^{T}\!\!\!\int_{\dom_0}\left|\frac{h}{\rho_0}\right|^2\dx\dt\right) \\ \label{eq:sup_y_limit}
&\leq C\esp\left(\|y_0\|^2_{L^2(\dom)}+\int_{0}^{T} \left[ \left\| \frac{F(t)}{\rho(t)}\right\|^2_{L^2(\dom)}+\left\|\frac{G(t)}{\rho(t)}\right\|^2_{L^2(\dom)}\right]\dt \right).
\end{align}
In particular, since $\rho_0$ is a continuous function satisfying $\rho_0(T)=0$, the above estimate implies 
\begin{equation*}
\label{eq:ytauzero}
y(T)=0 \quad \textnormal{in } \dom, \ \textnormal{a.s.}
\end{equation*}
\end{prop}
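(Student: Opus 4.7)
The plan is to adapt the source term method of \cite{LLT13} to the stochastic setting, combining the refined control cost of \Cref{prop:controllinear} with the dissipative behaviour of the homogeneous stochastic heat equation. I introduce a geometric time partition $[0,T) = \bigcup_{k \geq 0}[T_k, T_{k+1})$ by setting $T_k := T(1 - Q^{-sk/2})$, so that the subinterval lengths $a_k := T_{k+1} - T_k = T(Q^{s/2}-1)Q^{-s(k+1)/2}$ shrink geometrically. This choice is tailored to the exponents in \eqref{eq:rho0}--\eqref{eq:rho}: since $1/(T-T_k) = Q^{sk/2}/T$, the three quantities $\log(\rho_0(T_k)^{-2})$, $-\log(\rho(T_k)^2)$ and the control cost exponent $M/a_k$ all scale like $Q^{sk/2}$, and balancing them is what will make the forthcoming sum converge.

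On each pair of consecutive subintervals $[T_n, T_{n+2}]$ I construct a piece $y_n$ of the global solution in two phases. On the source absorption phase $[T_n, T_{n+1}]$, $y_n$ solves the uncontrolled equation \eqref{eq:heat_source} with source terms $F \mathbf{1}_{[T_n, T_{n+1}]}$, $G \mathbf{1}_{[T_n, T_{n+1}]}$ and initial datum $y_0$ if $n=0$, and $0$ otherwise; a standard It\^o/Burkholder-Davis-Gundy energy estimate combined with $\rho(t) \leq \rho(T_n)$ on this interval yields
\begin{equation*}
\esp \sup_{t\in [T_n,T_{n+1}]} \norme{y_n(t)}_{L^2(\dom)}^2 \leq C\,\rho(T_n)^2\, \esp \int_{T_n}^{T_{n+1}}\!\!\left(\norme{F(t)/\rho(t)}_{L^2(\dom)}^2 + \norme{G(t)/\rho(t)}_{L^2(\dom)}^2\right)\dt + C\,\esp\norme{y_0}_{L^2(\dom)}^2 \mathbf{1}_{\{n=0\}}.
\end{equation*}
On the control phase $[T_{n+1}, T_{n+2}]$ I apply \Cref{prop:controllinear} with horizon $a_{n+1}$ and initial data $y_n(T_{n+1}) \in L^2(\Omega, \mathcal{F}_{T_{n+1}}; L^2(\dom))$ to obtain an $\{\mathcal{F}_t\}_{t \geq T_{n+1}}$-adapted control $h_n$ driving $y_n$ to zero at time $T_{n+2}$ and satisfying $\esp\norme{h_n}^2 \leq C_{a_{n+1}} \esp\norme{y_n(T_{n+1})}^2$, with the accompanying energy bound $\esp \sup_{[T_{n+1}, T_{n+2}]} \norme{y_n(t)}^2 \leq C\,C_{a_{n+1}}\,\esp\norme{y_n(T_{n+1})}^2$. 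Extending each $y_n$ by zero outside $[T_n, T_{n+2}]$, I define $y := \sum_{n \geq 0} y_n$ and $h := \sum_{n \geq 0} h_n \mathbf{1}_{[T_{n+1}, T_{n+2}]}$; since on each $[T_k, T_{k+1}]$ only $y_{k-1}$ (control phase) and $y_k$ (source phase) are active (with the convention $y_{-1} := 0$), linearity shows that $(y,h)$ solves \eqref{eq:heat_source} with $y(0)=y_0$, and the gluing is continuous because the control phase of $y_{k-1}$ ends at $0$ where the source phase of $y_k$ starts from $0$.

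The estimate \eqref{eq:sup_y_limit} then reduces to summability of the weighted contributions. On $[T_k, T_{k+1}]$, using $\rho_0(t) \geq \rho_0(T_{k+1})$,
\begin{equation*}
\esp \sup_{t\in [T_k, T_{k+1}]} \norme{y(t)/\rho_0(t)}^2 \leq \frac{C\,C_{a_k}\,\rho(T_{k-1})^2}{\rho_0(T_{k+1})^2}\, E_{k-1} + \frac{C\,\rho(T_k)^2}{\rho_0(T_{k+1})^2}\, E_k + \frac{C}{\rho_0(T_1)^2}\esp\norme{y_0}^2 \mathbf{1}_{\{k=0\}},
\end{equation*}
where $E_n := \esp\int_{T_n}^{T_{n+1}}(\norme{F/\rho}^2 + \norme{G/\rho}^2)\dt$. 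Plugging in the explicit forms of $\rho$, $\rho_0$ and $C_{a_k} \leq M e^{M/a_k}$ and collecting the coefficients of $Q^{s(k+1)/2}$ in the exponents, one finds that $1 + 2P - 2(1+P) = -1$, hence $C_{a_k}\rho(T_{k-1})^2/\rho_0(T_{k+1})^2 \leq C\exp\bigl(-M Q^{s(k+1)/2}/((Q^{s/2}-1)T)\bigr)$, while the coefficient $2P - 2(1+P)Q^{s/2} < 0$ forces $\rho(T_k)^2/\rho_0(T_{k+1})^2$ to decay exponentially as well; the analogous bounds on each $\esp\norme{h_n/\rho_0}^2$ are similar. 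Summing over $k$ yields \eqref{eq:sup_y_limit}, and since $\rho_0(T)=0$, the sup estimate forces $y(T) = 0$ a.s.

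The main technical obstacle I anticipate is the precise accounting of the three exponential rates in the summability step: the identity for $T_k$ makes all three rates proportional to $Q^{sk/2}$, and it is crucial that the algebraic combination of coefficients coming from the control cost, from $1/\rho_0^2$ and from $\rho^2$ comes out strictly negative, which is exactly what motivates the specific form of $\rho$ and $\rho_0$ chosen in \eqref{eq:rho0}--\eqref{eq:rho}. A secondary but nontrivial issue is adaptivity bookkeeping when gluing: the control $h_n$ provided by \Cref{prop:controllinear} on $[T_{n+1}, T_{n+2}]$ must be adapted to the original filtration $\{\mathcal{F}_t\}$ and not a time-shifted one, which follows because the initial data $y_n(T_{n+1})$ is $\mathcal{F}_{T_{n+1}}$-measurable and $\{\mathcal{F}_t\}_{t \geq T_{n+1}}$ is itself the natural filtration of a Brownian motion after time $T_{n+1}$.
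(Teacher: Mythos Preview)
Your approach is essentially the same as the paper's: the time partition $T_k = T - TQ^{-sk/2}$, the two-phase (source absorption / control) construction on consecutive subintervals, and the exponent balance $1 + 2P - 2(1+P) = -1$ are exactly the paper's ingredients, though the paper organizes the decomposition interval-by-interval as $y = y_1 + y_2$ (with $y_1$ absorbing the sources and $y_2$ being controlled) rather than via your overlapping pieces $y_n$, and it packages your direct exponent computation into the identity $\rho_0(T_{k+2}) = \rho(T_k)\,\gamma(T_{k+2} - T_{k+1})$. Your handling of the initial datum (feeding $y_0$ into the first source phase rather than, as the paper does, into the first control phase) and your adaptivity remark are minor variations that do not alter the argument.
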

\begin{proof}
In the following, the constants $C>0$ can vary from line to line, they are independent of the parameters $k$ and $n$.

For $k\geq 0$, we define $T_k=T-\frac{T}{Q^{ks/2}}$. We easily have the following relation between the weights defined in \eqref{eq:defgamma}, \eqref{eq:rho0} and \eqref{eq:rho}
\begin{equation}
\label{eq:relationweights}
\rho_0(T_{k+2})=\rho(T_{k})\gamma(T_{k+2}-T_{k+1}). 
\end{equation}

For $k \geq 0$, we consider the equation 
\begin{equation}\label{eq:y_1}
\begin{cases}
\d y_1 = (\Delta y_1 + F)\dt+(ay_1+G)\d W(t) &\text{in } (T_{k},T_{k+1})\times \dom, \\
y_1=0 &\text{on } (T_k,T_{k+1})\times \Gamma, \\
y_1(T_k)=0 &\text{in } \dom.
\end{cases}
\end{equation}

We introduce the sequence of random variables $\{a_{k}\}_{k\geq 0}$
\begin{equation*}
a_0 = y_0\ \text{and}\ a_{k+1}=y_1(T_{k+1}).
\end{equation*}
For $k \geq 0$, we also consider the equation
\begin{equation}\label{eq:y_2}
\begin{cases}
\d y_2 = (\Delta y_2 + \chi_{\dom_0}h_k)\dt+(ay_2)\d W(t) &\text{in } (T_{k},T_{k+1})\times \dom, \\
y_2=0 &\text{on } (T_k,T_{k+1})\times \Gamma, \\
y_2(T_k)=a_k &\text{in } \dom.
\end{cases}
\end{equation}
Observe that due to the regularity of the solution of \eqref{eq:y_1}, each $a_{k}$, $k\geq 0$, is $\mathcal F_{T_k}$-measurable and belongs to $L^2(\Omega\times \dom)$. Hence, system \eqref{eq:y_2} is well posed for each $h_k\in L^2_{\mathcal F}(T_k,T_{k+1};L^2(\dom))$ thanks to \Cref{lem:regularity}.

According to \Cref{prop:controllinear}, we can construct a control $h_k\in L^2_{\mathcal F}(T_k,T_{k+1};L^2(\dom))$ such that
\begin{equation*}
y_2(T_{k+1})=0, \quad \textnormal{a.s.}
\end{equation*}
and the following estimates holds
\begin{equation}\label{eq:cost_hk}
\esp\left(\int_{T_k}^{T_{k+1}}\!\!\!\!\int_{\dom_0}|h_k(x,t)|^2\dx\dt\right)\leq \gamma^2(T_{k+1}-T_k)\esp\left(\|a_k\|^2_{L^2(\dom)}\right).
\end{equation}

By \Cref{lem:regularity} applied to \eqref{eq:y_1}, there exists $C_0>0$ such that
\begin{align}\label{eq:ener_y1}
\esp&\left(\|a_{k+1}\|_{L^2(\dom)}^2\right) \leq C_0\esp \left(\int _{T_k}^{T_{k+1}}\left[ \|F(t)\|^2_{L^2(\dom)}+\|G(t)\|^2_{L^2(\dom)}\right]\dt\right) 
\end{align}
By using \eqref{eq:cost_hk}, \eqref{eq:ener_y1}, the fact that $\rho$ is a non-increasing, deterministic function and \eqref{eq:relationweights}, we have
\begin{align*}
&\esp\left(\int_{T_{k+1}}^{T_{k+2}}\!\!\!\int_{\dom_0}|h_{k+1}(x,t)|^2\dx\dt\right)\\
& \leq C_0\gamma^2\left(T_{k+2}-T_{k+1}\right)\esp\left(\int _{T_k}^{T_{k+1}}\left[ \|F(t)\|^2_{L^2(\dom)}+\|G(t)\|^2_{L^2(\dom)}\right]\dt\right)\\
&\leq C_0 \gamma^2\left(T_{k+2}-T_{k+1}\right)\rho^2(T_k)\esp\left(\int_{T_k}^{T_{k+1}}\left[\int_{\dom}\left|\frac{F(t)}{\rho(T_k)}\right|^2+\left|\frac{G(t)}{\rho(T_k)}\right|^2\dx\right]\dt\right) \\
&\leq C_0 \gamma^2\left(T_{k+2}-T_{k+1}\right)\rho^2(T_k)\esp\left(\int_{T_k}^{T_{k+1}}\left[\left\|\frac{F(t)}{\rho(t)}\right\|^2_{L^2(\dom)}+\left\|\frac{G(t)}{\rho(t)}\right\|^2_{L^2(\dom)}\right]\dt\right)\\
& \leq C_0 \rho_0^2\left(T_{k+2}\right) \esp\left(\int_{T_k}^{T_{k+1}}\left[\left\|\frac{F(t)}{\rho(t)}\right\|^2_{L^2(\dom)}+\left\|\frac{G(t)}{\rho(t)}\right\|^2_{L^2(\dom)}\right]\dt\right),
\end{align*}
so we deduce
\begin{align}
\label{eq:hk+1int}
\esp&\left(\int_{T_{k+1}}^{T_{k+2}}\!\!\!\int_{\dom_0}\left|\frac{h_{k+1}(x,t)}{\rho_0(T_{k+2})}\right|^2\dx\dt\right) \leq C_0 \esp\left(\int_{T_k}^{T_{k+1}}\left[\left\|\frac{F(t)}{\rho(t)}\right\|^2_{L^2(\dom)}+\left\|\frac{G(t)}{\rho(t)}\right\|^2_{L^2(\dom)}\right]\dt\right).
\end{align}
Since $\rho_0$ is a non-increasing, deterministic function and using \eqref{eq:hk+1int}, we have
\begin{align}\label{eq:norm_hk_rho}
\esp&\left(\int_{T_{k+1}}^{T_{k+2}}\!\!\!\int_{\dom_0}\left|\frac{h_{k+1}(x,t)}{\rho_0(t)}\right|^2\dx\dt\right) \leq C_0 \esp\left(\int_{T_k}^{T_{k+1}}\left[\left\|\frac{F(t)}{\rho(t)}\right\|^2_{L^2(\dom)}+\left\|\frac{G(t)}{\rho(t)}\right\|^2_{L^2(\dom)}\right]\dt\right).
\end{align}

Let $n\in\mathbb \N^*$. From \eqref{eq:norm_hk_rho}, we have
\begin{align}\notag 
\esp&\left(\int_{T_1}^{T}\!\int_{\dom_0}\sum_{k=0}^{n}\mathbf{1}_{[T_{k+1},T_{k+2})}(t)\left|\frac{h_{k+1}}{\rho_0}\right|^2\dx\dt\right) \\ \label{eq:sum_hk}
&\leq C_0\esp\left(\int_{0}^{T}\sum_{k=0}^n\mathbf{1}_{[T_{k},T_{k+1})}(t)\left[\left\|\frac{F(t)}{\rho(t)}\right\|^2_{L^2(\dom)}+\left\|\frac{G(t)}{\rho(t)}\right\|_{L^2(\dom)}^2\right]\dt\right).
\end{align}
From \eqref{eq:cost_hk} at $k=0$ and recalling that $a_0=y_0$, we have
\begin{equation*}
\esp\left(\int_{0}^{T_1}\!\!\!\int_{\dom_0}\left|h_0\right|^2\dx\dt\right)\leq {\gamma^2\left(T_1\right)}\esp\left(\|y_0\|^2_{L^2(\dom)}\right),
\end{equation*}
so
\begin{equation}\label{eq:est_h0}
\esp\left(\int_{0}^{T_1}\!\!\!\int_{\dom_0}\left|\frac{h_0}{\rho_0}\right|^2\dx\dt\right)\leq \frac{\gamma^2\left(T_1\right)}{\rho_0^2(T_1)}\esp\left(\|y_0\|^2_{L^2(\dom)}\right).
\end{equation}
Putting together \eqref{eq:sum_hk} and \eqref{eq:est_h0} yields the existence of a constant $C>0$ independent of $n$ such that
\begin{align*} 
\esp&\left(\int_{0}^{T_1}\!\!\!\int_{\dom_0}\left|\frac{h_0}{\rho_0}\right|^2\dx\dt\right)+\esp\left(\int_{T_1}^{T}\!\int_{\dom_0}\sum_{k=0}^{n}\mathbf{1}_{[T_{k+1},T_{k+2})}(t)\left|\frac{h_{k+1}}{\rho_0}\right|^2\dx\dt\right) \\
&\leq C\esp\left(\|y_0\|^2_{L^2(\dom)}+\int_{0}^{T}\sum_{k=0}^n\mathbf{1}_{[T_{k},T_{k+1})}(t)\left[\left\|\frac{F(t)}{\rho(t)}\right\|^2_{L^2(\dom)}+\left\|\frac{G(t)}{\rho(t)}\right\|_{L^2(\dom)}^2\right]\dt\right).
\end{align*}
Finally, using Lebesgue's convergence theorem, we can pass to the limit $n\to \infty$ and obtain
\begin{align}\label{est:control_rho_0} 
\esp&\left(\int_{0}^{T}\!\!\!\int_{\dom_0}\left|\frac{h}{\rho_0}\right|^2\dx\dt\right) \leq C\esp\left(\|y_0\|^2_{L^2(\dom)}+\int_{0}^{T}\left[\left\|\frac{F(t)}{\rho(t)}\right\|^2_{L^2(\dom)}+\left\|\frac{G(t)}{\rho(t)}\right\|_{L^2(\dom)}^2\right]\dt\right).
\end{align}
where we have set $h:=\sum_{k=0}^\infty h_k$.

Applying It\^{o}'s rule to $y:=y_1+y_2$ for $t\in[T_k,T_{k+1})$, we get
\begin{align}
\label{eq:y}
\begin{cases}
\d y= (\Delta y+\chi_{\dom_0}h_k+F)\dt+(y+G)\d W(t), & \text{in } (T_k,T_{k+1})\times \dom, \\
y=0 & \text{on }(T_k,T_{k+1})\times\Gamma, \\
y(T_k)=a_k &\text{in }\dom.
\end{cases}
\end{align}
Note that by construction $y$ is continuous at $T_k$ for all $k\geq 0$, therefore by using \eqref{eq:y}, $y$ is a solution to \eqref{eq:heat_linear}.

On the other hand, by \Cref{lem:regularity} applied to \eqref{eq:y}, we have for $k\geq 1$
\begin{align*}
\esp&\left(\sup_{T_k\leq t\leq T_{k+1}} \|y(t)\|_{L^2(\dom)}^2\right) \\ 
&\leq C_0\esp \left(\|a_k\|^2_{L^2(\dom)} +\int _{T_{k}}^{T_{k+1}}\left[ \|\chi_{\dom_0}h_{k}(t)\|^2_{L^2(\dom)}+\|F(t)\|^2_{L^2(\dom)}+\|G(t)\|^2_{L^2(\dom)}\right]\dt\right)
\end{align*}
where we have used that $L^2(\dom)\subset H^{-1}(\dom)$. Then using \eqref{eq:cost_hk} and \eqref{eq:ener_y1} to estimate in the above equation yields
\begin{align*}
\esp&\left(\sup_{T_k\leq t\leq T_{k+1}} \|y(t)\|_{L^2(\dom)}^2\right) \\ 
& \leq C_0\esp \left(\int _{T_{k}}^{T_{k+1}}\left[ \|F(t)\|^2_{L^2(\dom)}+\|G(t)\|^2_{L^2(\dom)}\right]\dt\right) + \left(C_0+\gamma^2(T_{k+1}-T_k)\right)\esp \left( \|a_k\|^2_{L^2(\dom)}\right) \\
& \leq C \gamma^2(T_{k+1}-T_k) \esp \left(\int _{T_{k-1}}^{T_{k+1}}\left[ \|F(t)\|^2_{L^2(\dom)}+\|G(t)\|^2_{L^2(\dom)}\right]\dt\right).
\end{align*}

From identity \eqref{eq:relationweights} we get
\begin{align*}
&\esp\left(\sup_{T_k\leq t\leq T_{k+1}} \|y(t)\|_{L^2(\dom)}^2\right) \\ 
&\leq C \gamma^2(T_{k+1}-T_k) \rho^2(T_{k-1}) \esp \left(\int _{T_{k-1}}^{T_{k+1}}\left[ \left\| \frac{F(t)}{\rho(t)}\right\|^2_{L^2(\dom)}+\left\|\frac{G(t)}{\rho(t)}\right\|^2_{L^2(\dom)}\right]\dt\right)  \\
&\leq C\rho_0^2(T_{k+1}) \esp \left(\int _{T_{k-1}}^{T_{k+1}}\left[ \left\| \frac{F(t)}{\rho(t)}\right\|^2_{L^2(\dom)}+\left\|\frac{G(t)}{\rho(t)}\right\|^2_{L^2(\dom)}\right]\dt\right),
\end{align*}
so by using that $\rho_0$ is non-increasing, we have
\begin{align}
\esp\left(\sup_{T_k\leq t\leq T_{k+1}} \left\|\frac{y(t)}{\rho_0(t)}\right\|_{L^2(\dom)}^2\right) \leq C \esp \left(\int _{T_{k-1}}^{T_{k+1}}\left[ \left\| \frac{F(t)}{\rho(t)}\right\|^2_{L^2(\dom)}+\left\|\frac{G(t)}{\rho(t)}\right\|^2_{L^2(\dom)}\right]\dt\right). \label{eq:sup_y_rho_0}
\end{align}
Moreover, arguing as before, it is not difficult to establish that 
\begin{equation}
\label{eq:estimateynear0}
\esp  \left(\sup_{0\leq t\leq T_1}\left\|\frac{y(t)}{\rho_0(t)}\right\|^2\right) \leq C \left(\|y_0\|^2_{L^2(\dom)}+\int_{T_0}^{T_1} \left[ \left\| \frac{F(t)}{\rho(t)}\right\|^2_{L^2(\dom)}+\left\|\frac{G(t)}{\rho(t)}\right\|^2_{L^2(\dom)}\right]\dt \right).
\end{equation}

Let $n\in\mathbb N$. From \eqref{eq:sup_y_rho_0} and \eqref{eq:estimateynear0}, we have
\begin{align}\notag 
\esp & \left(\sup_{0\leq t\leq T_1}\left\|\frac{y(t)}{\rho_0(t)}\right\|^2\right)+\sum_{k=1}^{n}\esp\left(\sup_{T_{k}\leq t\leq T_{k+1}}\left\|\frac{y(t)}{\rho(t)}\right\|^2\right) \\ \label{eq:sup_y_n}
&\leq \widetilde C\esp\left(\|y_0\|^2_{L^2(\dom)}+\sum_{k=1}^{n}\int_{0}^{T}\mathbf{1}_{[T_{k-1},T_{k+1})} \left[ \left\| \frac{F(t)}{\rho(t)}\right\|^2_{L^2(\dom)}+\left\|\frac{G(t)}{\rho(t)}\right\|^2_{L^2(\dom)}\right]\dt \right)
\end{align}
where $\widetilde C>0$ is uniform with respect to $n$. Letting $n\to \infty$ in \eqref{eq:sup_y_n} yields
\begin{align}
\esp  \left(\sup_{0\leq t\leq T }\left\|\frac{y(t)}{\rho_0(t)}\right\|^2\right) \label{eq:sup_y_limit_aux}
\leq \widetilde C\esp\left(\|y_0\|^2_{L^2(\dom)}+\int_{0}^{T} \left[ \left\| \frac{F(t)}{\rho(t)}\right\|^2_{L^2(\dom)}+\left\|\frac{G(t)}{\rho(t)}\right\|^2_{L^2(\dom)}\right]\dt \right).
\end{align}
Finally, combining \eqref{est:control_rho_0} and \eqref{eq:sup_y_limit_aux} gives the desired result. This concludes the proof.
\end{proof}

\subsection{A byproduct: a new observability estimate for backward parabolic equation}
\label{sec:byproductnewobs}

We introduce the backward parabolic equation
\begin{equation}\label{eq:adj_gen}
\begin{cases}
\d z = -(\Delta z + \ov{z} + \tilde{F})\dt+a \ov{z} \d{W(t)} &\text{in } Q_T, \\
z=0 &\text{on } \Sigma_T, \\
z(T, \cdot)=z_T &\text{in } \dom.
\end{cases}
\end{equation}
where $\tilde{F}\in L^2_{\mathcal F}(0,T;L^2(\dom))$ and $z_T\in L^2(\Omega, \mathcal F_T;L^2(\dom))$. 

Under these conditions, by using \cite[Theorem 3.1]{zhou92}, the equation \eqref{eq:adj_gen} admits a unique solution $(z,\ov{z})\in \left[L^2_{\mathcal F}(\Omega; C([0,T];L^2(\dom)))\cap L^2_{\mathcal{F}}(0,T;H_0^1(\dom))\right]\times L^2_{\mathcal F}(0,T;L^2(\dom))$.

From a classical duality argument (see e.g. \cite[Lemma 2.48 \& Theorem 2.44]{Cor07}) and the duality between \eqref{eq:heat_source} and \eqref{eq:adj_gen}, we have as a consequence of the null-controllability result stated in \Cref{prop:source_stochastic} the following observability inequality.
\begin{cor}\label{cor:obs_ineq_adj}
For every $\tilde{F}\in L^2_{\mathcal F}(0,T;L^2(\dom))$ and $z_T\in L^2(\Omega, \mathcal F_T;L^2(\dom))$, the solution $(z,\ov{z})$ to \eqref{eq:adj_gen} satisfies
\begin{align}\notag 
\esp&\left(\int_{\dom}|z(0)|^2\dx\right)+\esp\left(\int_{Q_T}|\rho z|^2 \dx\dt\right)+\esp\left(\int_{Q_T}|\rho \ov{z}|^2\dx\dt\right)  \\ \label{eq:obs_ineq_adj}
& \leq C\esp\left(\int_{Q_{\dom_0}}|\rho_0z|^2\dx\dt+\int_{Q_T}|\rho_0 \tilde{F}|^2\dx\dt\right)
\end{align}
where $C>0$ is the constant appearing in \eqref{eq:sup_y_limit}.
\end{cor}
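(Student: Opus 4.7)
The plan is to invoke the classical HUM-type duality between the forward null-controllability with sources furnished by \Cref{prop:source_stochastic} and the backward adjoint equation \eqref{eq:adj_gen}. I would fix $(z_T,\tilde F)$ and the associated solution $(z,\bar z)$ of \eqref{eq:adj_gen}, and treat the forward data $(y_0,F,G)$ as test functions to be chosen; $(y,h)$ will denote the controlled solution/control pair produced by \Cref{prop:source_stochastic}, so that $y(T)=0$ a.s.\ and \eqref{eq:sup_y_limit} holds.

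First I would apply It\^o's formula to $t\mapsto \langle y(t),z(t)\rangle_{L^2(\dom)}$ on $[0,T]$, integrate by parts in the Laplacian terms (using the zero Dirichlet boundary conditions), and take expectation so that the stochastic integral vanishes. By design of \eqref{eq:adj_gen}, the covariation between the diffusion parts $(ay+G)\,\d W$ and $a\bar z\,\d W$ combines with the drift terms to cancel all unwanted coupling; together with $y(T)=0$ this yields the duality identity
\begin{equation*}
\esp\bigl(\langle y_0,z(0)\rangle_{L^2(\dom)}\bigr) + \esp\iint_{Q_T}(Fz+G\bar z)\,\dx\dt - \esp\iint_{Q_T} y\tilde F\,\dx\dt = -\esp\iint_{Q_{\dom_0}} hz\,\dx\dt.
\end{equation*}

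Next I would make the strategic choice $y_0=z(0)$, $F=\rho^2 z$, $G=\rho^2 \bar z$. These are admissible: $z(0)$ is $\fil_0$-measurable (hence a.s.\ deterministic) and lies in $L^2(\dom)$ by the well-posedness result of \cite{zhou92}, while $F/\rho=\rho z$ and $G/\rho=\rho\bar z$ belong to $L^2_\fil(0,T;L^2(\dom))$ since $\rho$ is bounded on $[0,T]$. Plugging this in, the left-hand side of the identity becomes exactly $\esp(\|z(0)\|^2)+\esp\iint|\rho z|^2+\esp\iint|\rho\bar z|^2$ (i.e.\ the LHS of \eqref{eq:obs_ineq_adj}), modulo the $\esp\iint y\tilde F$ contribution which I move to the right. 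The two remaining right-hand terms would be bounded by Cauchy--Schwarz:
\begin{equation*}
\Bigl|\esp\iint_{Q_{\dom_0}} hz\Bigr|\leq \|h/\rho_0\|\,\|\rho_0 z\|_{L^2_\fil(0,T;L^2(\dom_0))},\qquad \Bigl|\esp\iint_{Q_T} y\tilde F\Bigr|\leq \|y/\rho_0\|\,\|\rho_0 \tilde F\|.
\end{equation*}
Using \eqref{eq:sup_y_limit}, both $\|h/\rho_0\|$ and $\|y/\rho_0\|$ are then controlled by $C\bigl(\esp\|z(0)\|^2+\esp\|\rho z\|^2+\esp\|\rho\bar z\|^2\bigr)^{1/2}$; a Young inequality with small parameter absorbs these three quantities into the left-hand side and produces \eqref{eq:obs_ineq_adj}.

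The main (and rather mild) obstacle is the rigorous justification of the It\^o identity: since $y$ and $z$ only have $H_0^1$/$H^2$ regularity in space, the $L^2(\dom)$ product rule requires a density or Galerkin approximation argument, standard in the stochastic parabolic setting (cf.\ \cite[Lemma 2.48]{Cor07}), and the sign/coefficient of the diffusion covariation has to be tracked carefully to ensure the $\langle y,\bar z\rangle$ coupling truly cancels and no spurious term remains.
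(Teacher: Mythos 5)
Your proposal is correct and is essentially the paper's own argument: the paper proves the corollary by the same one-line appeal to the classical HUM duality between the controlled system of \Cref{prop:source_stochastic} and \eqref{eq:adj_gen} (citing \cite[Lemma 2.48 \& Theorem 2.44]{Cor07}), and your choice $y_0=z(0)$, $F=\rho^2 z$, $G=\rho^2\ov{z}$ followed by Cauchy--Schwarz and absorption via \eqref{eq:sup_y_limit} is exactly the standard way to fill in the details. The only caveat you already flag yourself: for the covariation term $a\ov{z}\cdot(ay+G)$ to cancel against the drift term $\ov{z}\,y$, the coefficients in \eqref{eq:adj_gen} must be arranged as in the adjoint of \eqref{eq:heat_source} (drift $a\ov{z}$, diffusion $\ov{z}\,\d W$), which appears to be a typographical slip in the paper rather than a gap in your argument.
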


Estimate \eqref{eq:obs_ineq_adj} looks like the classical observability inequality for the forward stochastic heat equation shown in \cite[Theorem 2.3]{TZ09} proved by means of Carleman estimates. However, our proof is far from Carleman-based strategies and an important difference can be pointed out. Unlike \cite[Eq. (1.6)]{TZ09}, in our estimate the process $\ov{z}$ stays on the left-hand side of the inequality which allows us to consider only one observation term. Although similar estimates with one observation can be obtained, see \cite{Lu11}, the incorporation of $\ov{z}$ on the left-hand side enables us to study more general control problems in the linear setting which are not covered by previous results. Moreover, this will enable us to study some controllability properties for systems with a nonlinear diffusion term. 

\subsection{Regular controlled trajectories}\label{sec:regular}
\indent The next proposition gives more information on the regularity of the controlled trajectory obtained in \Cref{prop:source_stochastic}. We define the weight $\hat{\rho}$ such that $\hat{\rho}(T) = 0$, satisfying the inequalities
\begin{gather}
\label{eq:defrho}
\rho_0 \leq C \hat{\rho},\ \rho \leq C \hat{\rho},\ |\hat{\rho}'| \rho_0 \leq C \hat{\rho}^2, \\ \label{eq:hat_rho_bound}
 {\hat{\rho}^s \leq C \rho}.
\end{gather}

For instance, one can take
\begin{equation*}
\hat{\rho}(t) =  \exp\left(- \frac{M\zeta}{(Q^{s/2}-1)(T-t)}\right),\ \text{with}\ \frac{(1+P)Q^s}{2} < \zeta < P.
\end{equation*}
\begin{prop}\label{prop:SourceTermReg}
For every $y_0 \in L^2(\Omega,\fil_0;H_0^1(\dom))$, $F \in \mathcal{S}$, $G \in  \mathcal{S}$ such that $\nabla G \in \mathcal{S}$, then there exists a control $h \in \mathcal{H}$, such that the solution $y$ of \eqref{eq:heat_linear} satisfies the following estimate
\begin{align}\notag 
 & \esp \left(\sup_{0\leq t\leq T } \left\|\frac{y(t)}{\hat{\rho}(t)}\right\|^2_{H_0^1(\dom)}\right) + \esp\left(\int_{0}^{T}\left\|\frac{y(t)}{\hat{\rho}(t)}\right\|^2_{H^2(\dom)}\dt\right) \\ \label{eq:reg_extra}
&\leq C\esp\left(\|y_0\|^2_{H_0^1(\dom)}+\int_{0}^{T}  \left\| \frac{F(t)}{\rho(t)}\right\|^2_{L^2(\dom)} \dt + \left\| \frac{G(t)}{\rho(t)}\right\|^2_{H^1(\dom)} \dt \right),
\end{align}
where $C$ is a positive constant depending only on $T$, $\dom$, $\dom_0$, $a$, $p$, $q$ and $r$.
\end{prop}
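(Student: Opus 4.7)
The plan is to take the control $h$ provided by \Cref{prop:source_stochastic} (applied to \eqref{eq:heat_source}) and to upgrade the $L^2$-weighted estimate \eqref{eq:sup_y_limit} to the $H_0^1$/$H^2$-weighted estimate \eqref{eq:reg_extra} by a change of unknown, paying for it with the extra regularity assumptions $y_0 \in H_0^1(\dom)$ and $\nabla G \in \mathcal{S}$. The key observation is that the weight $\hat{\rho}$ is deterministic, smooth on $[0,T)$, space-independent and vanishes only at $t=T$. Consequently, the rescaled process $z := y/\hat{\rho}$ is well defined on $[0,T)$ and It\^o's formula applied to the deterministic function $t \mapsto 1/\hat{\rho}(t)$ gives
\begin{equation*}
\d z = \left(\Delta z + \chi_{\dom_0}\frac{h}{\hat{\rho}} + \frac{F}{\hat{\rho}} - \frac{\hat{\rho}'}{\hat{\rho}}\,z \right)\dt + \left(a z + \frac{G}{\hat{\rho}}\right)\d W(t),
\end{equation*}
with $z = 0$ on $\Sigma_T$ and $z(0) = y_0/\hat{\rho}(0)$.

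Next I would control each source term using the compatibility conditions \eqref{eq:defrho}. From $\rho \leq C\hat{\rho}$ we get $|F|/\hat{\rho} \leq C|F|/\rho$, which lies in $L^2_\fil(0,T;L^2(\dom))$ by the assumption $F \in \mathcal{S}$; the same bound handles $G/\hat{\rho}$, and since $\hat{\rho}$ does not depend on $x$, the extra hypothesis $\nabla G \in \mathcal{S}$ yields $\nabla(G/\hat{\rho}) = \nabla G/\hat{\rho} \in L^2_\fil(0,T;L^2(\dom))$, so that $G/\hat{\rho} \in L^2_\fil(0,T;H_0^1(\dom))$. For the potentially singular zero-order drift I would rewrite
\begin{equation*}
\frac{\hat{\rho}'}{\hat{\rho}}\,z \;=\; \frac{\hat{\rho}'}{\hat{\rho}^{\,2}}\,y,
\end{equation*}
and use the compatibility bound $|\hat{\rho}'|\rho_0 \leq C\hat{\rho}^{\,2}$ from \eqref{eq:defrho}, which is equivalent to $|\hat{\rho}'|/\hat{\rho}^{\,2} \leq C/\rho_0$. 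This term is thus controlled pointwise by $C|y|/\rho_0$, whose $L^2_\fil(0,T;L^2(\dom))$-norm is bounded by \eqref{eq:sup_y_limit}. The same trick, combined with $\rho_0 \leq C\hat{\rho}$, controls $h/\hat{\rho}$ by $h/\rho_0$, which is already estimated in \eqref{eq:sup_y_limit}.

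Finally, I would apply a standard $H_0^1$--$H^2$ maximal regularity estimate for the linear stochastic heat equation (in the spirit of the regularity lemma invoked in the proof of \Cref{prop:source_stochastic}, but one derivative higher) to the equation for $z$: with an $H_0^1$ initial datum, an $L^2$-valued drift source and an $H_0^1$-valued diffusion source, it yields a bound on $\esp\left(\sup_{t \in [0,T]}\|z(t)\|_{H_0^1(\dom)}^2\right) + \esp\bigl(\int_0^T \|z(t)\|_{H^2(\dom)}^2\,\dt\bigr)$ in terms of the data. Substituting the bounds of the previous paragraph and invoking \eqref{eq:sup_y_limit} to absorb the terms coming from $(\hat{\rho}'/\hat{\rho})z$ and $h/\hat{\rho}$ delivers \eqref{eq:reg_extra}. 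The main technical obstacle is precisely the zero-order drift $(\hat{\rho}'/\hat{\rho})z$, whose coefficient blows up like $(T-t)^{-2}$: the compatibility relation $|\hat{\rho}'|\rho_0 \leq C\hat{\rho}^{\,2}$ that is built into the definition of $\hat{\rho}$ is exactly what is needed to reabsorb this singularity into the already-controlled weighted $L^2$-norm of $y$, and it is the reason behind the specific choice of $\hat{\rho}$ in \Cref{sec:regular}.
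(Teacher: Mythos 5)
Your proposal is correct and follows essentially the same route as the paper: the paper likewise sets $w := y/\hat{\rho}$ for the controlled trajectory from \Cref{prop:source_stochastic}, derives the same transformed equation via It\^o's formula (writing the singular drift as $\frac{\hat{\rho}'\rho_0}{\hat{\rho}^2}\frac{y}{\rho_0}$, exactly your rewriting), and concludes by the maximal regularity estimate of \Cref{lem:regularity} together with the compatibility bounds \eqref{eq:defrho}. The only cosmetic slip is that $G/\hat{\rho}$ lands in $L^2_{\fil}(0,T;H^1(\dom))$ rather than $H_0^1(\dom)$, which is all that \Cref{lem:regularity}(b) requires anyway.
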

The proof of \Cref{prop:SourceTermReg} is a straightforward adaptation of \cite[Proposition 2.8]{LLT13}. We sketch it briefly. Let us consider a control $h \in \mathcal{H}$ and $y \in \mathcal{Y}$ the corresponding controlled solution provided by \Cref{prop:source_stochastic}. We define $w:=\frac{y}{\hat{\rho}}$ and by means of It\'{o}'s formula we readily deduce that $w$ verifies
\begin{equation*}
\d{w}=\left(\Delta w+\chi_{\dom_0}\frac{h}{\hat{\rho}}+\frac{F}{\hat{\rho}}-\frac{\hat{\rho}^\prime\rho_0}{\hat{\rho}^2}\frac{y}{\rho_0}\right)\dt+\left(aw+\frac{G}{\hat\rho}\right)\d{W}(t) \quad\text{in } Q_T
\end{equation*}
and the conclusion follows from applying the maximal regularity estimate of \Cref{lem:regularity} and using estimates \eqref{eq:defrho}.
\begin{rmk}
\label{rmk:uniquenesstrajectory}
For each $y_0 \in L^2(\Omega,\fil_0;H_0^1(\dom))$, $F \in \mathcal{S}$, $G \in  \mathcal{S}$ such that $\nabla G \in \mathcal{S}$, by classical arguments, see \cite[Proposition 2.9]{LLT13}, we can fix a control $h \in \mathcal{H}$ such that $y$ satisfies \eqref{eq:reg_extra}, by choosing among those the unique minimizer of the functional 
$$ h \mapsto \norme{h}_{\mathcal{H}}^2 + \esp \left(\sup_{0\leq t\leq T } \left\|\frac{y(t)}{\hat{\rho}(t)}\right\|^2_{H_0^1(\dom)}\right) + \esp\left(\int_{0}^{T}\left\|\frac{y(t)}{\hat{\rho}(t)}\right\|^2_{H^2(\dom)}\dt\right).$$
\end{rmk}

\section{The fixed point argument}\label{sec:fixed_point}

The goal of this section is to prove \Cref{th:mainresult1} and \Cref{th:mainresult2}. 

\subsection{Proof of the global null-controllability result for the truncated equation}\label{subsec:fixed}

\begin{proof}[Proof of \Cref{th:mainresult1}]
We split the proof into three main steps:
\begin{itemize}
\item First, we prove some Lipschitz type estimate on $f$,
\item then, we see how the previous estimate translates for $f_R$,
\item finally, we employ a Banach fixed-point argument to prove \Cref{th:mainresult1}.
\end{itemize}

To simplify, we will only treat the case $n=3$, i.e. $f(y) = \alpha y^p$, with $1<p\leq 3$ and $g(y) = 0$. The other cases can be treated in a similar way, see \Cref{rmk:n1n2} below.

The constants that will appear may vary from line to line but are independent of the parameter $R>0$.

\textbf{Step 1: A Lipschitz estimate for $f$.}

Consider $y_1,y_2\in X_T$. The goal of this step is to prove the following estimate
\begin{align}
\norme{\frac{f(y_1)-f(y_2)}{\rho}}_{L^2(\dom)}
 \leq C &\left(\norme{\frac{y_1}{\hat{\rho}}}_{H^1(\dom)}^{p-1} +\norme{\frac{y_2}{\hat{\rho}}}_{H^1(\dom)}^{p-1} \right) \norme{\frac{y_1-y_2}{\hat{\rho}}}_{H^2(\dom)}.\label{eq:est_f}
\end{align}

First, we have by using $|a^p - b^p| \leq C |a-b||(|a|^{p-1} + |b|^{p-1})$, Hölder's estimate and Minkowski's inequality
\begin{align*}
\|\alpha y_1^p-\alpha y_2^p\|_{L^2(\dom)}  &\leq C \norme{y_1-y_2}_{L^{\infty}(\dom)} \norme{|y_1|^{p-1}+|y_2|^{p-1}}_{L^2(\dom)} \\
&\leq C \norme{y_1-y_2}_{L^{\infty}(\dom)} \left( \norme{y_1}_{L^{2(p-1)}(\dom)}^{p-1} + \norme{y_2}_{L^{2(p-1)}(\dom)}^{p-1}\right).
\end{align*}
So, by using $H^2(\dom) \hookrightarrow L^{\infty}(\dom)$ and $H^1(\dom) \hookrightarrow L^6(\dom) \hookrightarrow  L^{2(p-1)}(\dom)$ because $n = 3$ and $p \leq 3$, we deduce 
\begin{align}
\label{eq:est_f_withoutrho}
\norme{\alpha y_1^p-\alpha y_2^p}_{L^2(\dom)}
& \leq C \norme{y_1-y_2}_{H^2(\dom)}\left(\norme{y_1}_{H^1(\dom)}^{p-1}+\|y_2\|_{H^1(\dom)}^{p-1}\right).
\end{align}
Since the weights $\rho$ and $\hat{\rho}$ are $x$-independent, we can incorporate them in \eqref{eq:est_f_withoutrho} to obtain
\begin{align*}
|\rho|\norme{\frac{f(y_1)-f(y_2)}{\rho}}_{L^2(\dom)}
& \leq C |\hat{\rho}|^p \left(\norme{\frac{y_1}{\hat{\rho}}}_{H^1(\dom)}^{p-1} + \norme{\frac{y_2}{\hat{\rho}}}_{H^1(\dom)}^{p-1}  \right) \norme{\frac{y_1-y_2}{\hat{\rho}}}_{H^2(\dom)},
\end{align*}
which leads to \eqref{eq:est_f}, using \eqref{eq:hat_rho_bound}.

\textbf{Step 2: A Lipschitz estimate for $f_R$.}

We borrow some ideas from \cite{LF14} and \cite{Gao17}. we recall that the space $X_t$ is defined in \eqref{eq:defXt}. Without loss of generality, we assume that
\begin{equation}
\label{eq:asswithoutgenerality}
\norme{y_2}_{X_t}\leq \norme{y_1}_{X_t}.
\end{equation}

Using the definition of $f_R$, see \eqref{eq:deffR}, and triangle inequality we have
\begin{align} \label{eq:step21}
\norme{\frac{f_R(y_1)-f_R(y_2)}{\rho}}_{L^2(\dom)}&=\norme{\frac{\varphi_R(y_1)f(y_1)-\varphi_R(y_2)f(y_2)}{\rho}}_{L^2(\dom)} \leq I_1+I_2,
\end{align}
where
\begin{align}
\label{eq:defI1}I_1 &:=  \norme{\frac{\varphi_R(y_1)\left[f(y_1)-f(y_2)\right]}{\rho}}_{L^2(\dom)},\\
\label{eq:defI2}I_2 &= \norme{\frac{\left[\varphi_R(y_1)-\varphi_R(y_2)\right]f(y_2)}{\rho}}_{L^2(\dom)}.
\end{align}
From the definition \eqref{eq:defvarphiR}, \eqref{eq:asswithoutgenerality} and the mean value theorem, we have
\begin{equation}\label{eq:varphi_diff}
\left|\varphi_R(y_1)-\varphi_R(y_2)\right|=(C/R)\left|\norme{y_1}_{X_t}-\norme{y_2}_{X_t}\right|\chi_{\left\{\norme{y_2}_{X_t}\leq 2R\right\}}.
\end{equation}
Thus, using \eqref{eq:varphi_diff} in \eqref{eq:defI2} and the triangle inequality, we get
\begin{equation} \label{eq:est_I2_inter}
I_2\leq (C/R)\norme{y_1-y_2}_{X_t} \norme{\frac{f(y_2)}{\rho}}_{L^2(\dom)} \chi_{\left\{\norme{y_2}_{X_t}\leq 2R\right\}}.
\end{equation}
So from \eqref{eq:est_I2_inter} and $H^1(\dom) \hookrightarrow L^6(\dom) \hookrightarrow  L^{2p}(\dom)$ because $p \leq 3$, we have
\begin{align}
\notag I_2& \leq (C/R) \norme{y_1-y_2}_{X_t} \norme{\frac{y_2}{\hat{\rho}}}_{H^1(\dom)}^{p}  \chi_{\left\{\norme{y_2}_{X_t}\leq 2R\right\}}\\
& \leq C R^{p-1} \norme{y_1-y_2}_{X_t}\chi_{\left\{\norme{y_2}_{X_t}\leq 2R\right\}} \label{eq:est_I2}.
\end{align}

For $I_1$ defined in \eqref{eq:defI1}, we use the definition of $\varphi_R$ in \eqref{eq:defvarphiR}, the estimate \eqref{eq:est_f} established in Step 1 and \eqref{eq:asswithoutgenerality} to get
\begin{align}\notag
I_1&\leq C \left(\norme{\frac{y_1}{\hat{\rho}}}_{H^1(\dom)}^{p-1} +\norme{\frac{y_2}{\hat{\rho}}}_{H^1(\dom)}^{p-1} \right)\norme{\frac{y_1-y_2}{\hat{\rho}}}_{H^2(\dom)} \chi_{\left\{\norme{y_1}_{X_t}\leq 2R\right\}}\notag\\
& \leq C R^{p-1}  \norme{\frac{y_1-y_2}{\hat{\rho}}}_{H^2(\dom)}\chi_{\left\{\norme{y_1}_{X_t}\leq 2R\right\}} .\label{eq:est_I1}
\end{align}

Finally, we combine \eqref{eq:step21}, \eqref{eq:est_I1} and \eqref{eq:est_I2}
\begin{align}\label{eq:trunc_explicit}
\norme{\frac{f_R(y_1)-f_R(y_2)}{\rho}}_{L^2(\dom)} &\leq C R^{p-1}  \left(\norme{y_1-y_2}_{X_t} + \norme{\frac{y_1-y_2}{\hat{\rho}}}_{H^2(\dom)}\right). 
\end{align}

\textbf{Step 3: A Banach fixed-point argument.}

Let $y_0 \in L^2(\Omega,\fil_0;H_0^1(\dom))$.

We introduce the following mapping
\begin{equation*}
\mathcal{N} : F \in \mathcal{S} \mapsto f_R(y) \in \mathcal{S},
\end{equation*}
where $y$ is the solution to \eqref{eq:heat_linear} defined in \Cref{prop:SourceTermReg} and \Cref{rmk:uniquenesstrajectory}.

First, we show that $\mathcal{N}$ is well-defined. By taking the square of \eqref{eq:trunc_explicit} with $y_2=0$, then integrating in time between $0$ and $T$ then taking the expectation, we obtain 
\begin{align}
\esp\left(\int_0^T \norme{\frac{f_R(y(t))}{\rho(t)}}_{L^2(\dom)}^2 \dt\right) &\leq C^2 R^{2(p-1)}\esp\left(T \norme{y}_{X_T}^2 + \norme{y}_{X_T}^2\right)\leq C^2 R^{2(p-1)}\esp \left( \norme{y}_{X_T}^2\right). \label{eq:EstimateNF}
\end{align}
Note that we have used $\norme{\cdot}_{X_t} \leq \norme{\cdot}_{X_T}$ for every $t \in [0,T]$ in \eqref{eq:EstimateNF}. Then using the estimate \eqref{eq:reg_extra}, we have
\begin{equation*}
\esp\left(\int_0^T \norme{\frac{f_R(y(t))}{\rho(t)}}_{L^2(\dom)}^2 \dt\right) \leq C^2 R^{2(p-1)} \esp\left(\norme{y_0}^2_{H_0^1(\dom)}+\int_0^T \norme{\frac{F(t)}{\rho(t)}}_{L^2(\dom)}^2 \dt\right) < + \infty,
\end{equation*}
which translates into $f_R(y) \in \mathcal{S}$.

Secondly, we show that $\mathcal{N}$ is a strictly contraction mapping. By taking the square of \eqref{eq:trunc_explicit} and arguing as in \eqref{eq:EstimateNF}, we obtain 
\begin{equation*}
\esp\left(\int_0^T \norme{\frac{f_R(y_1(t))-f_R(y_2(t))}{\rho(t)}}_{L^2(\dom)}^2 \dt\right) \leq C^2 R^{2(p-1)}\esp\left( \norme{y_1 - y_2}_{X_T}^2 \right),
\end{equation*}
then using the estimate \eqref{eq:reg_extra} with $y_0=0$, we have
\begin{equation*}
\esp\left(\int_0^T \norme{\frac{f_R(y_1)-f_R(y_2)}{\rho}}_{L^2(\dom)}^2\right) \leq C^2 R^{2(p-1)} \ \esp\left(\int_0^T \norme{\frac{F_1-F_2}{\rho}}_{L^2(\dom)}^2\right),
\end{equation*}
which translates into
\begin{equation}
\label{eq:LipschitzN}
\norme{\mathcal{N}(F_1)-\mathcal{N}(F_2)}_{\mathcal{S}} \leq C^2 R^{2(p-1)} \norme{F_1-F_2}_{\mathcal{S}}.
\end{equation}
So taking $R$ such that
\begin{equation}
\label{eq:defRsmall}
C^2 R^{2(p-1)}  < 1,
\end{equation}
we deduce from \eqref{eq:LipschitzN} that $\mathcal{N}$ is a strictly contraction mapping of the Banach space $\mathcal{S}$ so $\mathcal{N}$ admits a unique fixed point $F$. By calling $y$ the trajectory associated to this source term $F$, we remark that $y$ is the solution to \eqref{eq:heat_seminlinear_R}. 

Moreover, we observe from \eqref{eq:reg_extra} and \eqref{eq:EstimateNF} that
\begin{align*}
 \esp\left(\norme{y}_{X_T}^2 \right)
& \leq C^2 \esp\left(\|y_0\|^2_{H_0^1(\dom)}+\int_0^T \norme{\frac{f_R(y(t))}{\rho(t)}}_{L^2(\dom)}^2 \dt \right)\\
& \leq C^2 \left( \esp\|y_0\|^2_{H_0^1(\dom)}  +  R^{2(p-1)} )\esp  \norme{y}_{X_T}^2 \right).
\end{align*}
so taking $R$ sufficiently small if necessary, we can assume that $C^2 R^{2(p-1)}  < 1$, then
\begin{align}
\esp\left(\norme{y}_{X_T}^2\right) \label{eq:reg_extraInTheProof}
\leq C^2 \esp\left(\|y_0\|^2_{H_0^1(\dom)}\right),
\end{align}
which leads to the expected estimate \eqref{eq:estimatenonlinearityPropfR}. This concludes the the proof of \Cref{th:mainresult1}.
\end{proof}

\begin{rmk}
\label{rmk:n1n2}
The case $n=2$, i.e. $f(y) = \alpha y^p$ with $p \in (1, +\infty)$ could be treated as follows. For Step 1, using that $H^2(\dom) \hookrightarrow L^{\infty}(\dom)$ and $H^1(\dom) \hookrightarrow  L^{2(p-1)}(\dom)$, we can obtain \eqref{eq:est_f_withoutrho} so \eqref{eq:est_f} holds. For Step 2, using $H^1(\dom) \hookrightarrow L^{2p}(\dom)$, we can obtain \eqref{eq:est_I2} so \eqref{eq:trunc_explicit} holds.

For the case $n=1$, i.e. $f(y, \partial_x y) = \alpha y^p + \beta y^q \partial_x y$, $g(y) = \gamma y^r$, using $H^1(\dom) \hookrightarrow L^{\infty}(\dom)$, we can prove the following estimates, 
\begin{equation*}
\norme{\beta y_1^q \partial_x y_1 -\beta y_2^q \partial_x y_2}_{L^2(\dom)}
 \leq C \norme{y_1-y_2}_{H^1(\dom)}\left(\norme{y_1}_{H^1(\dom)}^{q}+\|y_2\|_{H^1(\dom)}^{q}\right),
\end{equation*}
\begin{equation*}
\norme{\gamma y_1^r -\gamma y_2^r }_{H^1(\dom)}
 \leq C \norme{y_1-y_2}_{H^1(\dom)}\left(\norme{y_1}_{H^1(\dom)}^{r-1}+\|y_2\|_{H^1(\dom)}^{r-1}\right),
\end{equation*}
then the Lipschitz estimate \eqref{eq:est_f} in Step 1 becomes
\begin{align*}
&\norme{\frac{f(y_1,\partial_x y_1)-f(y_2,\partial_x y_2)}{\rho}}_{L^2(\dom)}+\norme{\frac{g(y_1)-g(y_2)}{\rho}}_{H^1(\dom)}
\\ 
&\leq C \left(\norme{\frac{y_1}{\hat{\rho}}}_{H^1(\dom)}^{q} +  \norme{\frac{y_1}{\hat{\rho}}}_{H^1(\dom)}^{r-1 }+\norme{\frac{y_1}{\hat{\rho}}}_{H^1(\dom)}^{p-1 } \right) \norme{\frac{y_1-y_2}{\hat{\rho}}}_{H^2(\dom)} \\
&\quad + C \left(\norme{\frac{y_2}{\hat{\rho}}}_{H^1(\dom)}^{q} + \norme{\frac{y_2}{\hat{\rho}}}_{H^1(\dom)}^{r-1} +\norme{\frac{y_2}{\hat{\rho}}}_{H^1(\dom)}^{p-1 } \right)\norme{\frac{y_1-y_2}{\hat{\rho}}}_{H^2(\dom)}.
\end{align*}

For obtaining an estimate similar to \eqref{eq:est_I2} for $f$ and $g$, we use again the embedding of $H^1(\dom)$ in $L^{\infty}(\dom)$ to obtain
\begin{align*}
(C/R)\norme{y_1-y_2}_{X_t} \norme{\frac{\beta y_2^q \partial_x y_2}{\rho}}_{L^2(\dom)} \chi_{\left\{\norme{y_2}_{X_t}\leq 2R\right\}} &\leq (C/R) \norme{y_1-y_2}_{X_t} \norme{\frac{y_2}{\hat{\rho}}}_{H^1(\dom)}^{q+1}  \chi_{\left\{\norme{y_2}_{X_t}\leq 2R\right\}}\\
& \leq C R^{q} \norme{y_1-y_2}_{X_t}\chi_{\left\{\norme{y_2}_{X_t}\leq 2R\right\}},
\end{align*}
and
\begin{align*}
(C/R)\norme{y_1-y_2}_{X_t} \norme{\frac{\gamma y_2^r}{\rho}}_{H^1(\dom)} \chi_{\left\{\norme{y_2}_{X_t}\leq 2R\right\}} &\leq (C/R) \norme{y_1-y_2}_{X_t} \norme{\frac{y_2}{\hat{\rho}}}_{H^1(\dom)}^{r}  \chi_{\left\{\norme{y_2}_{X_t}\leq 2R\right\}}\\
& \leq C R^{r-1} \norme{y_1-y_2}_{X_t}\chi_{\left\{\norme{y_2}_{X_t}\leq 2R\right\}},
\end{align*}
so \eqref{eq:trunc_explicit} in Step 2 should be replaced by  
\begin{align*}
&\norme{\frac{f_R(y_1,\partial_x y_1)-f_R(y_2,\partial_x y_2)}{\rho}}_{L^2(\dom)}+\norme{\frac{g_R(y_1)-g_R(y_2)}{\rho}}_{H^1(\dom)} \\
&\quad \leq C(R^q+R^{r-1}+R^{p-1})\left(\norme{y_1-y_2}_{X_t}+\norme{\frac{y_1-y_2}{\hat{\rho}}}_{H_2(\dom)}\right).
\end{align*}
The rest of the proof is analogous.
\end{rmk}

\begin{rmk}
Observe that in the right hand side of \eqref{eq:reg_extra}, we have to estimate the $H^1$-norm of source term in the diffusion while we only have to estimate the $L^2$-norm of source term in the drift. This is why we can prove  \Cref{th:mainresult1} and \Cref{th:mainresult2} only for the case $g(y, \nabla y) = \gamma y^r$ with $\gamma \in \R$, $r \in (1, +\infty)$ and $n=1$.
\end{rmk}

\begin{rmk}\label{rmk:regularity_source}

In the previous proof, see for instance Step 2, by looking at \eqref{eq:est_I2_inter} and \eqref{eq:est_I2}, another possibility might be to estimate as follows 
\begin{align*}
 I_2& \leq (C/R) \norme{y_1-y_2}_{X_t} \norme{\frac{y_2}{\hat{\rho}}}_{H^2(\dom)}^{p}  \chi_{\left\{\norme{y_2}_{X_t}\leq 2R\right\}},
\end{align*}
using $H^2(\dom) \hookrightarrow L^{2p}(\dom)$ which holds for every $p \in [1, +\infty]$ because $n \leq 3$. With this type of estimate, at first glance, it seems that we can treat nonlinearites $\alpha y^p$, for every $p \in (1, +\infty)$ but the problems comes from that we do not have $
\norme{\frac{y}{\hat{\rho}}}_{H^2(\dom)} \leq \norme{y}_{X_t}$, see the definition of the norm $X_t$ in \eqref{eq:defXt} so one cannot obtain \eqref{eq:est_I2} with this strategy.
\end{rmk}

\subsection{Proof of the statistical local null-controllability result}
\label{sec:prooflocal}

We are now in position to prove \Cref{th:mainresult2}.

\begin{proof}[Proof of \Cref{th:mainresult2}]
Let $T>0$ and $\epsilon >0$ be given. Let $R$ as in \Cref{th:mainresult1} and $C^2$ as in equation \eqref{eq:estimatenonlinearityPropfR}. Let us fix $y_0 \in L^2(\Omega,\fil_0;H_0^1(\dom))$ such that
\begin{equation}
\label{eq:defmsalldelta}
\norme{y_0}_{ L^2(\Omega,\fil_0;H_0^1(\dom))} \leq \delta,
\end{equation}
where $\delta >0$ verifying 
\begin{equation}\label{eq:size_delta}
\frac{C^2 \delta^2}{R^2} \leq \epsilon.
\end{equation}
Thanks to \Cref{th:mainresult1}, we know that there exists a control $h \in \mathcal{H}$, such that the solution $y$ of \eqref{eq:heat_seminlinear_R} satisfies $y(T, \cdot) = 0$ a.s. and the estimate \eqref{eq:estimatenonlinearityPropfR} holds. Notice that this result is independent of the size of the initial datum. 

By Markov's inequality, \eqref{eq:estimatenonlinearityPropfR} and \eqref{eq:defmsalldelta}, we have 
\begin{equation*}
\mathbb{P} \left(\norme{y}_{X_T}^2 > R^2 \right) 
\leq \frac{\esp\left(\norme{y}_{X_T}^2\right) }{R^2} 
\leq  \frac{ C^2 \esp\left(\|y_0\|^2_{H_0^1(\dom)}\right)}{R^2} \leq  \frac{C^2 \delta^2}{R^2},
\end{equation*}
so by \eqref{eq:size_delta}, we deduce 
\begin{equation*}
\mathbb{P} \left( \norme{y}_{X_T} \leq  R \right) \geq 1 - \varepsilon.
\end{equation*} 
Using the fact that $\sup_{t \in [0,T]} \norme{\cdot}_{X_t} = \norme{\cdot}_{X_T}$, we {easily} deduce \eqref{eq:probafRf}. This concludes the proof of \Cref{th:mainresult2}.
\end{proof}

\section{Remarks on the case of the backward equation}

The strategy introduced in the previous sections can be used to deal with the controllability of semilinear backward equations. Most of the arguments can be adapted and only minor adjustments are needed. To fix ideas, let us consider the system given by
\begin{equation}\label{eq:back_semilinear}
\begin{cases}
\d{z}=-(\Delta z+\tilde{f}(z)+\sigma \ov{z}+\chi_{\dom_0}h)\dt+\ov{z}\d{W}(t) &\text{in } Q_T, \\
z=0 &\text{in } \Sigma_T, \\
z(T)=z_T &\text{in } \dom,
\end{cases}
\end{equation}
where $z_T$ is a given initial datum, $\tilde{f}$ is a suitable nonlinear function and $\sigma\in\mathbb R$. Notice that the function $\tilde f$ only depends on the variable $z$. This is due to some technical reasons that we shall explain in more detail in \Cref{rmk:non_z_bar}. To simplify, we take $\tilde{f}(z) = z^2$, but other polynomial semilinearities could be considered. 

As for the forward system, the idea is to find a control $h\in L^2_{\mathcal F}(0,T;L^2(\dom))$ such that $z(0,\cdot)=0$, a.s. 
First, we linearize \eqref{eq:back_semilinear} around 0 to obtain
\begin{equation}\label{eq:back_linearized}
\begin{cases}
\d{z}=-(\Delta z+\chi_{\dom_0}h+\sigma\ov{z})\dt+\ov{z}\d{W}(t) &\text{in } Q_T, \\
z=0 &\text{in } \Sigma_T, \\
z(T)=z_T &\text{in } \dom.
\end{cases}
\end{equation}
For each initidal datum $z_T\in L^2_{\mathcal F}(0,T;L^2(\dom))$, system \eqref{eq:back_linearized} admits a unique solution $(z,\ov{z})\in [L^2_{\mathcal F}(\Omega;C([0,T];L^2(\dom)))\cap L^2_{\mathcal F}(0,T;H_0^1(\dom))]\times L^2_{\mathcal F}(0,T;L^2(\dom))$  (see \Cref{lem:regularity_backward}). 

Following \Cref{sec:control_forward}, the first thing to do is to obtain a controllability result for the linear equation \eqref{eq:back_linearized}. By duality, this can be done by obtaining a suitable observability inequality for its adjoint system. In this case, it is not difficult to see that the adjoint is given by
\begin{align*}
\begin{cases}
\d{r}=\Delta r\dt +\sigma r\d{W(t)} &\text{in }Q_T,  \\
r=0 &\text{on }\Sigma_T, \\
r(0)=r_0 &\text{in }\dom,
\end{cases}
\end{align*}
where $r_0\in L^2(\Omega,\mathcal F_0;L^2(\dom))$. The observability inequality for the above system can be deduced by using the Carleman estimate in \cite[Thm. 1.1]{liu14}. In more detail, we have that there exists a constant $C>0$ such that
\begin{align}\label{eq:obs_forw_random}
\esp\left(\int_{\dom}|r(T)|^2\right)\leq C_T \esp \left(\int_0^T\!\!\!\int_{\dom_0}|r|^2\dx\dt\right)
\end{align}
for all $r_0\in L^2(\Omega,\mathcal F_0;L^2(\dom)$. A close inspection to the proof of \cite[Thm. 1.1]{liu14} allows to conclude that the constant $C_T$ is of the form $Ce^{C/T}$ where $C>0$ only depends on $\sigma$. With this, we have the following result.

\begin{theo}
For every $T>0$, $z_T\in L^2(\Omega,\mathcal F_T;L^2(\dom))$, there exists $h\in L^2_{\mathcal F}(0,T;\dom_0)$ such that $y(0)=0$ in $\dom$, a.s. Moreover, we have the following estimate
\begin{equation*}
\esp\left(\iint_{\dom_0\times(0,T)}|h|^2\dx\dt\right)\leq C_T \esp\left(\|z_T\|_{L^2(\dom)}^2\right)
\end{equation*}
where $C_T=Ce^{C/T}$ with $C>0$ only depending on $\dom$, $\dom_0$ and $\sigma$. 
\end{theo}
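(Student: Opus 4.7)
The plan is to apply the Hilbert Uniqueness Method in its stochastic backward form: the controllability of \eqref{eq:back_linearized} will be obtained as the dual of the observability inequality \eqref{eq:obs_forw_random}. This is exactly the reverse of the passage from \Cref{prop:source_stochastic} to \Cref{cor:obs_ineq_adj} already carried out in the paper, so only minor bookkeeping is needed.

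First, I would write the duality identity between \eqref{eq:back_linearized} and its forward adjoint. Given a candidate control $h$ and the associated backward trajectory $(z,\ov z)$ of \eqref{eq:back_linearized}, and given $r_0\in L^2(\Omega,\fil_0;L^2(\dom))$ with associated forward adjoint $r$ solving $\d r = \Delta r\dt+\sigma r\d W(t)$ with $r=0$ on $\Sigma_T$ and $r(0)=r_0$, I would apply It\^{o}'s formula to $t\mapsto \int_{\dom}z(t)r(t)\dx$. The It\^{o} correction term $\sigma\ov z r$ cancels with the drift contribution coming from $-\sigma\ov z$ in \eqref{eq:back_linearized}, integration by parts in space disposes of the Laplacians thanks to the boundary conditions, and after taking expectations one is left with
\begin{equation*}
\esp\int_{\dom}z_T\, r(T)\dx - \esp\int_{\dom}z(0)\, r_0\dx = -\esp\int_0^T\!\!\!\int_{\dom_0} h\, r\dx\dt.
\end{equation*}
Imposing $z(0)=0$ a.s.\ is therefore equivalent to finding $h\in L^2_{\fil}(0,T;L^2(\dom_0))$ satisfying $\esp\int_0^T\!\!\!\int_{\dom_0} h\, r\dx\dt = -\esp\int_{\dom}z_T\, r(T)\dx$ for every $r_0\in L^2(\Omega,\fil_0;L^2(\dom))$.

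Second, I would build such an $h$ by minimising the HUM functional
\begin{equation*}
J(r_0):=\frac{1}{2}\esp\int_0^T\!\!\!\int_{\dom_0}|r|^2\dx\dt+\esp\int_{\dom}z_T\, r(T)\dx
\end{equation*}
over $r_0\in L^2(\Omega,\fil_0;L^2(\dom))$. Continuity and strict convexity are routine; the decisive point is coercivity, and this is exactly what \eqref{eq:obs_forw_random} delivers: by Cauchy--Schwarz and \eqref{eq:obs_forw_random},
\begin{equation*}
\left|\esp\int_{\dom} z_T\, r(T)\dx\right|\leq \sqrt{C_T}\,\big(\esp\|z_T\|_{L^2(\dom)}^2\big)^{1/2}\left(\esp\int_0^T\!\!\!\int_{\dom_0}|r|^2\dx\dt\right)^{1/2},
\end{equation*}
and Young's inequality yields coercivity in the observation seminorm. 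A minimiser $r_0^{*}$ therefore exists, and choosing $h:=\chi_{\dom_0}r^{*}$ with $r^{*}$ the adjoint trajectory associated to $r_0^{*}$, the Euler--Lagrange equation for $J$ is precisely the duality identity above; hence $z(0)=0$ a.s.

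The cost estimate then drops out of $J(r_0^{*})\leq J(0)=0$ combined with the coercivity lower bound: one obtains $\esp\int_0^T\!\!\!\int_{\dom_0}|h|^2\dx\dt\leq 4C_T\,\esp\|z_T\|_{L^2(\dom)}^2$, with $C_T=Ce^{C/T}$ inherited verbatim from \eqref{eq:obs_forw_random}. The only genuine technical nuisance, as in the deterministic HUM, is that the observation term in $J$ is a priori only a seminorm on $L^2(\Omega,\fil_0;L^2(\dom))$; I would deal with it in the standard way by either minimising over the abstract completion with respect to this seminorm and checking a posteriori that $h\in L^2_{\fil}(0,T;L^2(\dom_0))$, or by adding a penalisation $\tfrac{\varepsilon}{2}\esp\|r_0\|_{L^2(\dom)}^2$ to $J$, solving the uniformly convex penalised problem, and passing to the limit $\varepsilon\to 0$ using the uniform bound provided by \eqref{eq:obs_forw_random}.
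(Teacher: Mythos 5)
Your proposal is correct and follows exactly the route the paper intends: the paper does not write out this proof but refers to the classical duality/HUM argument based on the observability inequality \eqref{eq:obs_forw_random} (citing \cite[Section 7]{TZ09}), which is precisely what you carry out, including the correct handling of the It\^{o} correction term, the coercivity from observability, and the seminorm completion issue. No gaps.
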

The proof of this result is classical and it is consequence of \eqref{eq:obs_forw_random}, we refer to \cite[Section 7]{TZ09} (see also \cite[Section 2.4]{hsp20}). 

With this result at hand, the next step is to prove an analogous result to \Cref{prop:source_stochastic}. Note, however, that this time the equation evolves backward in time and the construction of the weights \eqref{eq:rho0}--\eqref{eq:rho} are not longer useful. Let us fix $M>0$ such that $C_T\leq Me^{M/T}$ and introduce the weight 
\begin{equation*}
\forall t\in[0,T), \quad \tilde \gamma(t)=Me^{\frac{M}{T-t}}.
\end{equation*}
Observe that this weight blows-up as $t\to T^{-}$. For some parameters $Q\in (1,\sqrt{2})$ and $P>Q^2/(2-Q^2)$, we define the weights
\begin{align*}
\forall t\in(0,T], \ \tilde \rho(t):=M^{-1-P}\exp\left(-\frac{(1+P)Q^2M}{(Q-1)t}\right)
\end{align*}
and
\begin{align*}
\forall t\in(0,T], \ \tilde \rho_0(t):=M^{-P}\exp\left(-\frac{PM}{(Q-1)t}\right)
\end{align*}

Notice that these weights are very similar to \eqref{eq:rho0}--\eqref{eq:rho}, however this time they are strictly increasing and they vanish as $t\to 0^+$. For an appropriate source term $\tilde F\in L^2_{\fil}(0,T;L^2(\dom))$, we consider
\begin{equation}\label{eq:heat_source_backward}
\begin{cases}
\d z = (-\Delta z + \chi_{\dom_0}h + \tilde F)\dt+\ov{z}\d W(t) &\text{in } Q_T, \\
z=0 &\text{on } \Sigma_T, \\
z(T,\cdot)=z_T &\text{in } \dom.
\end{cases}
\end{equation}

We define associated spaces for the source term, the state and the control as follows
\begin{align*}
& \tilde{\mathcal{S}} := \left\{S \in  L^2_{\fil}(0,T;L^2(\dom))\ ;\ \frac{S}{\tilde \rho} \in  L^2_{\fil}(0,T;L^2(\dom))\right\},\\
& \mathcal{\tilde Y}:= \left\{y \in L^2_{\fil}(0,T;L^2(\dom))\ ;\ \frac{y}{\tilde \rho_0} \in L^2_{\fil}(0,T;L^2(\dom))\right\},\\ 
& \tilde{\mathcal{H}} := \left\{h \in L^2_{\fil}(0,T;L^2(\dom))\ ;\ \frac{h}{\tilde \rho_{0}}  \in L^2_{\fil}(0,T;L^2(\dom))\right\}.
\end{align*}
From the behaviors near $t=0$ of $\tilde \rho$ and $\tilde \rho_0$, we deduce that each element of $\tilde{\mathcal{S}}$, $\tilde{\mathcal{Y}}$, $\tilde{\mathcal{H}}$ vanishes at $t=0$.

We have the following result.
\begin{prop}\label{prop:source_stochastic_backward}
For every $z_T \in L^2(\Omega,\fil_T;L^2(\dom))$ and $\tilde F\in \tilde{\mathcal{S}}$, 
there exists a control $h \in \tilde{\mathcal{H}}$ such that the corresponding controlled solution $y$ to \eqref{eq:heat_source_backward} belongs to $\tilde{\mathcal{Y}}$. Moreover, there exists a positive constant $C>0$ depending only on $T$, $\dom$, $\dom_0$, such that
\begin{align}\notag 
\esp & \left(\sup_{0\leq t\leq T }\left\|\frac{z(t)}{\tilde \rho_0(t)}\right\|^2\right)+\esp\left(\int_{0}^{T}\!\!\!\int_{\dom_0}\left|\frac{h}{\tilde \rho_0}\right|^2\dx\dt\right) \\ \label{eq:sup_y_limit_backward}
&\leq C\esp\left(\|z_T\|^2_{L^2(\dom)}+\int_{0}^{T}  \left\| \frac{\tilde F(t)}{\tilde \rho(t)}\right\|^2_{L^2(\dom)} \dt \right).
\end{align}
In particular, since $\tilde \rho_0$ is a continuous function satisfying $\tilde \rho_0(0)=0$, the above estimate implies 
\begin{equation*}
z(0)=0 \quad \textnormal{in } \dom, \ \textnormal{a.s.}
\end{equation*}
\end{prop}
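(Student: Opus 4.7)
The plan is to mirror the proof of \Cref{prop:source_stochastic}, reversing every time orientation. I would first introduce the decreasing sequence $T_k:=T/Q^{k/2}$, so that $T_0=T$ and $T_k\to 0^+$, splitting $[0,T]$ into subintervals $[T_{k+1},T_k]$ of geometrically decreasing length. A direct computation with the closed-form expressions of $\tilde\rho_0$, $\tilde\rho$ and $\tilde\gamma$ would give the backward analog of \eqref{eq:relationweights}, namely an identity of the form $\tilde\rho_0(T_{k+2})=\tilde\rho(T_k)\,\tilde\gamma(T_{k+1}-T_{k+2})$, which propagates the weights through the induction. On each subinterval $[T_{k+1},T_k]$, I would split $z=z_1+z_2$, where $z_1$ solves the uncontrolled backward equation with source $\tilde F$ and zero terminal value at $T_k$, while $z_2$ solves the controlled backward equation with terminal value $a_k$ at $T_k$, the sequence being initialised by $a_0:=z_T$ and $a_{k+1}:=z_1(T_{k+1})$. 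Since backward stochastic parabolic equations produce $\fil_t$-adapted solutions, each $a_k$ is $\fil_{T_k}$-measurable, so the controllability theorem stated just above the proposition applies and delivers a control $h_k\in L^2_{\mathcal F}(T_{k+1},T_k;L^2(\dom_0))$ steering $z_2(T_{k+1})=0$ a.s.\ at cost $\esp\|h_k\|^2\leq \tilde\gamma^2(T_{k+1}-T_{k+2})\,\esp\|a_k\|_{L^2(\dom)}^2$.

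Next, I would couple this cost estimate with the backward maximal regularity bound $\esp\|a_{k+1}\|_{L^2(\dom)}^2\leq C_0\,\esp\int_{T_{k+1}}^{T_k}\|\tilde F(t)\|_{L^2(\dom)}^2\dt$ provided by \Cref{lem:regularity_backward} applied to $z_1$. Feeding this into the cost of $h_{k+1}$ and invoking the weight identity together with the monotonicity of $\tilde\rho_0$ and $\tilde\rho$ yields the per-interval bound
\begin{equation*}
\esp\int_{T_{k+2}}^{T_{k+1}}\!\!\int_{\dom_0}\left|\frac{h_{k+1}}{\tilde\rho_0}\right|^2\dx\dt \leq C\,\esp\int_{T_{k+1}}^{T_k}\left\|\frac{\tilde F(t)}{\tilde\rho(t)}\right\|^2_{L^2(\dom)}\dt,
\end{equation*}
while the first interval $[T_1,T_0]$ contributes a term controlled by the $\|z_T\|_{L^2(\dom)}^2$ piece of the right-hand side of \eqref{eq:sup_y_limit_backward}. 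Telescoping over $k$ and applying Lebesgue's dominated convergence theorem as $k\to\infty$ then produces the control part of \eqref{eq:sup_y_limit_backward}. The supremum estimate on the state is obtained, as in the forward case, by applying \Cref{lem:regularity_backward} once more to $z=z_1+z_2$ on each subinterval, dividing through by $\tilde\rho_0$ and summing. Continuity of $\tilde\rho_0$ together with $\tilde\rho_0(0)=0$ finally forces $z(0)=0$ a.s.

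The main obstacle I anticipate is twofold. First, the backward maximal regularity lemma must control $z$ and its martingale companion $\ov z$ simultaneously in terms of the terminal data and $\tilde F$, since $\ov z$ is part of the unknown rather than a prescribed forcing; one has to verify that the chaining of estimates across the junction points $T_k$ preserves integrability of $\ov z$ and that the glued control indeed belongs to $\tilde{\mathcal H}$. Second, the algebraic verification of the backward weight identity requires checking that the prescribed parameter ranges $Q\in(1,\sqrt{2})$ and $P>Q^2/(2-Q^2)$ make $\tilde\rho_0$ decay slightly slower than $\tilde\rho$ near $0^+$, so that the exponential blow-up of $\tilde\gamma$ on each short subinterval can be absorbed into the ratio $\tilde\rho_0(T_{k+2})/\tilde\rho(T_k)$ uniformly in $k$. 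This computation is routine but must be carried out with the exponents tied to the envisaged quadratic nonlinearity $\tilde f(z)=z^2$.
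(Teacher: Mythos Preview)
Your approach is exactly what the paper intends: the authors omit the proof and only record that it mirrors \Cref{prop:source_stochastic}, using the backward regularity of \Cref{lem:regularity_backward} together with the continuous weight identity $\tilde\rho_0(t)=\tilde\rho(Q^2 t)\,\tilde\gamma\bigl(T+(1-Q)t\bigr)$.

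Two bookkeeping points will make your sketch match that identity. First, take $T_k=T/Q^{k}$ rather than $T/Q^{k/2}$, so that $Q^2T_{k+2}=T_k$ and the paper's identity evaluated at $t=T_{k+2}$ becomes exactly the discrete relation you need. Second, recall that the paper defines $\tilde\gamma(s)=Me^{M/(T-s)}$, which is \emph{not} the control cost on an interval of length $s$; the cost on an interval of length $\tau$ is $\tilde\gamma(T-\tau)=Me^{M/\tau}$. Hence the discrete relation reads $\tilde\rho_0(T_{k+2})=\tilde\rho(T_k)\,\tilde\gamma\bigl(T-(T_{k+1}-T_{k+2})\bigr)$, and the cost bound for $h_k$ acting on $[T_{k+1},T_k]$ must carry the length $T_k-T_{k+1}$, not $T_{k+1}-T_{k+2}$. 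With these fixes the telescoping and monotonicity arguments go through verbatim.
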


The proof is similar to the one of \Cref{prop:source_stochastic} and can be adapted just by taking into account the definitions of $\tilde\rho$, $\tilde\rho_0$, the identity $\tilde\rho_0(t)=\tilde\rho\left(Q^2t\right)\tilde \gamma\left(T+(1-Q)t\right)$ and the first regularity estimate of \Cref{lem:regularity_backward}. For brevity, we omit it. 

As in \Cref{sec:regular}, once this result have been established, a more regular controlled trajectory can be obtained. Indeed, defining a weight $\tilde{\hat\rho}$ such that $\tilde{\hat\rho}(0)=0$ verifying 
\begin{equation*}
\label{eq:defrho_hat}
\tilde \rho_0 \leq C \tilde{\hat{\rho}},\ \tilde{\rho} \leq C \tilde{\hat{\rho}},\ |\tilde{\hat{\rho}}^\prime| \tilde{\rho}_0 \leq C \tilde{\hat{\rho}}^2,
\end{equation*}
we can prove the following result by using the maximal regularity estimate in \Cref{lem:regularity_backward}.
\begin{prop}\label{prop:SourceTermReg_backward}
For every $z_T \in L^2(\Omega,\fil_T;H_0^1(\dom))$ and $\tilde F \in \tilde{\mathcal{S}}$, then there exists an unique control $h$ of minimal norm in $\tilde{\mathcal{H}}$, such that the solution $y$ of \eqref{eq:heat_source_backward} satisfies the following estimate
\begin{align}\notag 
 & \esp \left(\sup_{0\leq t\leq T } \left\|\frac{z(t)}{\tilde{\hat{\rho}}(t)}\right\|^2_{H_0^1(\dom)}\right) + \esp\left(\int_{0}^{T}\left\|\frac{z(t)}{\tilde{\hat{\rho}}(t)}\right\|^2_{H^2(\dom)}\dt\right)+\esp\left(\int_0^{T}\norme{\frac{\ov{z}(t)}{\tilde{\hat\rho}(t)}}_{H^1(\dom)}^2\dt\right) \\ \label{eq:reg_extra_backward}
&\leq C\esp\left(\|y_0\|^2_{H_0^1(\dom)}+\int_{0}^{T} \left\| \frac{\tilde{F}(t)}{\tilde{\rho}(t)}\right\|^2_{L^2(\dom)} \dt  \right),
\end{align}
where $C>0$ is only depends on $T$, $\dom$, and $\dom_0$.
\end{prop}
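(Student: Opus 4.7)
The plan is to mimic closely the argument used for the forward version in Proposition 2.8, now taking into account the extra backward-specific output $\ov{z}/\tilde{\hat\rho}$ in $H^1$. Concretely, fix $z_T \in L^2(\Omega,\fil_T;H_0^1(\dom))$ and $\tilde F \in \tilde{\mathcal S}$, and let $(z,\ov z)$ be the controlled pair produced by \Cref{prop:source_stochastic_backward}, associated to a control $h \in \tilde{\mathcal H}$ which we will eventually fix by a norm-minimization/projection argument as in \Cref{rmk:uniquenesstrajectory}. The idea is to introduce the weighted unknowns
\begin{equation*}
w:=\frac{z}{\tilde{\hat\rho}},\qquad \ov w:=\frac{\ov z}{\tilde{\hat\rho}},
\end{equation*}
and derive a backward SPDE for $(w,\ov w)$ with sources that are square-integrable against the weights.

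Since $\tilde{\hat\rho}$ is deterministic, a direct application of the It\^o (product) rule to $w=(1/\tilde{\hat\rho})\,z$ on each interval of smoothness of $\tilde{\hat\rho}$ gives
\begin{equation*}
\d w = -\left(\Delta w + \chi_{\dom_0}\frac{h}{\tilde{\hat\rho}} + \frac{\tilde F}{\tilde{\hat\rho}} + \frac{\tilde{\hat\rho}'}{\tilde{\hat\rho}^2}\,\tilde\rho_0\cdot\frac{z}{\tilde\rho_0}\right)\dt + \ov w\,\d W(t)\quad\text{in }Q_T,
\end{equation*}
with $w=0$ on $\Sigma_T$ and $w(T)=z_T/\tilde{\hat\rho}(T)$. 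The first, second, and fourth source terms are in $L^2_{\fil}(0,T;L^2(\dom))$ thanks to the bounds $\tilde\rho_0\le C\tilde{\hat\rho}$, $\tilde\rho\le C\tilde{\hat\rho}$, $|\tilde{\hat\rho}'|\tilde\rho_0\le C\tilde{\hat\rho}^2$, together with $h\in\tilde{\mathcal H}$, $\tilde F\in\tilde{\mathcal S}$, and $z/\tilde\rho_0\in L^2_{\fil}(0,T;L^2(\dom))$ (the latter being part of \eqref{eq:sup_y_limit_backward}). Moreover $w(T)\in L^2(\Omega,\fil_T;H_0^1(\dom))$ since $\tilde{\hat\rho}(T)>0$.

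Next, I would apply the maximal regularity estimate for backward stochastic parabolic equations (the backward analogue of \Cref{lem:regularity} announced as \Cref{lem:regularity_backward}) to the pair $(w,\ov w)$. This yields
\begin{align*}
&\esp\!\left(\sup_{0\le t\le T}\|w(t)\|_{H_0^1(\dom)}^2\right)
+\esp\!\left(\int_0^T\|w(t)\|_{H^2(\dom)}^2\dt\right)
+\esp\!\left(\int_0^T\|\ov w(t)\|_{H^1(\dom)}^2\dt\right)\\
&\qquad \le C\,\esp\!\left(\|z_T\|_{H_0^1(\dom)}^2
+\int_0^T\!\left[\Big\|\chi_{\dom_0}\tfrac{h}{\tilde{\hat\rho}}\Big\|_{L^2}^2
+\Big\|\tfrac{\tilde F}{\tilde{\hat\rho}}\Big\|_{L^2}^2
+\Big\|\tfrac{\tilde{\hat\rho}'\tilde\rho_0}{\tilde{\hat\rho}^2}\tfrac{z}{\tilde\rho_0}\Big\|_{L^2}^2\right]\dt\right).
\end{align*}
Using the weight inequalities listed above, each source term on the right is bounded by $C\esp(\int_0^T\|\tilde F/\tilde\rho\|_{L^2}^2\dt)$ plus $C\,\esp(\|h/\tilde\rho_0\|_{L^2(Q)}^2)$ plus $C\,\esp(\|z/\tilde\rho_0\|_{L^2(Q)}^2)$, and the last two quantities are in turn controlled by the right-hand side of \eqref{eq:sup_y_limit_backward}. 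Plugging this bound in delivers \eqref{eq:reg_extra_backward}. Uniqueness of the minimizer-control follows by the same Hilbert-space projection argument as in \Cref{rmk:uniquenesstrajectory}, minimizing $\|h\|_{\tilde{\mathcal H}}^2$ plus the left-hand side of \eqref{eq:reg_extra_backward} over the affine set of admissible controls produced by \Cref{prop:source_stochastic_backward}.

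The main technical obstacle is twofold. First, one has to make sure that the weighted maximal regularity result for backward SPDEs indeed furnishes the $H^1$-in-space bound for the martingale integrand $\ov w$, which is a distinctive feature absent in the forward case; this is exactly the content invoked via \Cref{lem:regularity_backward}. Second, the algebraic compatibility of $\tilde{\hat\rho}$ with $\tilde\rho_0$ and $\tilde\rho$ must hold not only pointwise but also with the derivative bound $|\tilde{\hat\rho}'|\tilde\rho_0\le C\tilde{\hat\rho}^2$ so that the corrector term coming from It\^o's rule on $z/\tilde{\hat\rho}$ is absorbable. A candidate that does the job is
\begin{equation*}
\tilde{\hat\rho}(t)=\exp\!\left(-\frac{M\tilde\zeta}{(Q-1)t}\right),\qquad \frac{(1+P)Q^2}{2}<\tilde\zeta<P,
\end{equation*}
mirroring the choice made for $\hat\rho$ in the forward case; a direct check of the three inequalities reduces to comparisons of the constants $\tilde\zeta$, $(1+P)Q^2/2$, and $P$ through elementary calculus, and guarantees all required embeddings.
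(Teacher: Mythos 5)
Your proposal is correct and follows essentially the same route as the paper: the paper likewise introduces the weight $\tilde{\hat\rho}$ subject to $\tilde\rho_0\le C\tilde{\hat\rho}$, $\tilde\rho\le C\tilde{\hat\rho}$, $|\tilde{\hat\rho}'|\tilde\rho_0\le C\tilde{\hat\rho}^2$, passes to $w=z/\tilde{\hat\rho}$ via It\^o's rule, absorbs the corrector and source terms using \Cref{prop:source_stochastic_backward}, and concludes with the maximal regularity estimate of \Cref{lem:regularity_backward}(b), which is precisely where the $H^1$-bound on $\ov{z}/\tilde{\hat\rho}$ comes from. Your explicit candidate for $\tilde{\hat\rho}$ and the verification of the weight inequalities (nonempty parameter range guaranteed by $P>Q^2/(2-Q^2)$) correctly fill in details the paper leaves implicit.
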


We conclude this section remarking that the analysis of the semilinear case can be carried out as in \Cref{sec:fixed_point}, we just need to adapt the analysis of the truncated nonlinearity. This can be done by considering the space
\begin{align*}
\tilde{X}_t:=\Bigg\{ &z \in [C([0,t];H_0^1(\dom))\cap L^2(0,t;H^2(\dom))] : \\
			     & \sup_{0\leq s\leq t}\norme{\frac{z(s)}{\tilde{\hat\rho}(s)}}_{H_0^1(\dom)}+\left(\int_0^t \norme{\frac{z(s)}{\tilde{\hat\rho}(s)}}_{H^2(\dom)}^2\right)^{1/2} <+\infty\Bigg\}.
\end{align*}
Thus, following the notation of \Cref{sec:fixed_point}, it is not difficult to see that most arguments can be readily adapted. For our example, we can discover that
\begin{equation*}
\norme{\frac{f_R(z_1)-f_R(z_2)}{\tilde \rho}}\leq C R \left(\norme{z_1-z_2}_{\tilde X_t}+\norme{\frac{z_1-z_2}{\tilde{\hat\rho}}}_{H^2}\right)
\end{equation*}
where $R>0$ is small and $f_R(z)=\varphi_R(\|z\|_{\tilde{X}_t})f(z)$, with $\varphi_R$ as in \eqref{eq:defvarphiR}. Of course, this can be generalized to consider some other polynomial nonlinearities but it is not the goal here.

Then, we can do a fixed point argument analogous to Step 3 in \Cref{subsec:fixed} and obtain results for the global controllability case with truncated nonlinearity (cf. \Cref{th:mainresult1}) and the \textit{statistical} local null-controllability case (cf. \Cref{th:mainresult2}) for the semilinear backward system \eqref{eq:back_semilinear}. For brevity, we omit the details.

\begin{rmk}\label{rmk:non_z_bar}
One may wonder why we cannot consider a more general nonlinearity of the form $f(z,\ov{z})=z^2+\ov{z}^2$. Of course, we can change the the space $\tilde X_t$ and include the process $\ov{z}\in L^2(0,T;H^1(\dom))$ in its definition. However, observe that this only adds an $L^2$-in-time estimate for $\ov{z}$, which does not allow us to obtain a nice Lipschitz estimate for the nonlinearity. This is closely related to \Cref{rmk:regularity_source} and the fact that we cannot estimate, for instance, $\norme{\frac{\ov{z}(s)}{\tilde{\rho}(s)}}_{H^1(\dom)}^2\leq \int_{0}^{t}\norme{\frac{\ov{z}(s)}{\tilde{\rho}(s)}}_{H^1(\dom)}^2\d{s}$.
\end{rmk}

\renewcommand{\abstractname}{Acknowledgements}
\begin{abstract}
\end{abstract}
\vspace{-0.5cm}
The work of the first author was supported by the programme ``Estancias posdoctorales por M\'exico'' of CONACyT, Mexico. The second author was supported by the SysNum cluster of excellence University of Bordeaux.

\appendix

\section{Regularity results}

The following result concerns some regularity estimates for forward stochastic parabolic equations. In a slightly more general form they are due to Krylov and Rozovskii \cite{KR77}. We follow the presentation of \cite[Proposition 2.1]{zhou92}.

\begin{lem}\label{lem:regularity}
Let $\tau \in (0,1)$. We have the following energy estimates for \eqref{eq:heat_source} (with $h\equiv 0$).
\begin{itemize}
\item[a)] Assume that $F\in L^2_{\fil}(0,\tau;H^{-1}(\dom))$, $G\in L^2_{\fil}(0,\tau;L^2(\dom))$, and $y_0\in L^2(\Omega,\mathcal F_{\tau};L^2(\dom))$, then \eqref{eq:heat_source} has a unique solution $y\in L^2_{\fil}(0,\tau; H^1_0(\dom))\cap L^2_{\fil}(\Omega; C([0,\tau];L^2(\dom))$. Moreover, there exists a positive constant $C_0$ independent of $\tau$, $F$, $G$ and $y_0$ such that
\begin{align*}\notag
\esp&\left(\sup_{0\leq t\leq \tau} \|y(t)\|_{L^2(\dom)}^2\right)+\esp\left(\int_{0}^{\tau}\|y(t)\|^2_{H^1_0(\dom)}\dt \right) \\
&\leq C_0\esp \left(\|y_0\|^2_{L^2(\dom)} +\int _{0}^{\tau}\left[ \|F(t)\|^2_{H^{-1}(\dom)}+\|G(t)\|^2_{L^2(\dom)}\right]\dt\right) .
\end{align*}
\item[b)] Assume that $F\in L^2_{\fil}(0,\tau;L^2(\dom))$, $G\in L^2_{\fil}(0,\tau;H^1(\dom))$, and $y_0\in L^2(\Omega,\mathcal F_{\tau};H_0^1(\dom))$, then \eqref{eq:heat_source} has a unique solution $y\in L^2_{\fil}(0,\tau; H^2(\dom))\cap L^2_{\fil}(\Omega; C([0,\tau];H_0^1(\dom))$. Moreover, there exists a positive constant $C_0$ independent of $\tau$, $F$, $G$ and $y_0$ such that
\begin{align*}\notag
\esp&\left(\sup_{0\leq t\leq \tau} \|y(t)\|_{H_0^1(\dom)}^2\right)+\esp\left(\int_{0}^{\tau}\|y(t)\|^2_{H^2(\dom)}\dt \right) \\
&\leq C_0\esp \left(\|y_0\|^2_{H_0^1(\dom)} +\int _{0}^{\tau}\left[ \|F(t)\|^2_{L^{2}(\dom)}+\|G(t)\|^2_{H^1(\dom)}\right]\dt\right) .
\end{align*}
\end{itemize}
\end{lem}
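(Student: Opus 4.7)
The plan is to establish both energy estimates by the standard Krylov--Rozovskii energy method, in each case combining It\^o's formula on the appropriate norm with integration by parts, Young's inequality to absorb the highest-order terms, the Burkholder--Davis--Gundy inequality for the supremum of the martingale part, and Gronwall's lemma to close the estimate. Existence and uniqueness would be obtained by a Galerkin approximation in the basis $(\phi_k)_{k\geq 1}$ of eigenfunctions of $-\Delta$: the finite-dimensional projected equations are a linear SDE system with globally Lipschitz coefficients (in the deterministic data), hence well-posed; the energy estimates below, applied on the Galerkin level, yield the uniform bounds needed to pass to the limit.

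For part a), I would start from the Galerkin approximation $y_N \in E_{\lambda_N}$ and apply It\^o's formula to $t\mapsto \norme{y_N(t)}_{L^2(\dom)}^2$. Using the equation and integration by parts,
\begin{align*}
\d\norme{y_N}_{L^2}^2 = \bigl(-2\norme{\nabla y_N}_{L^2}^2 + 2\langle F_N,y_N\rangle_{H^{-1},H_0^1}+\norme{ay_N+G_N}_{L^2}^2\bigr)\dt + 2\langle ay_N+G_N,y_N\rangle\,\d W(t).
\end{align*}
The duality pairing is bounded by $\tfrac12\norme{\nabla y_N}_{L^2}^2+C\norme{F_N}_{H^{-1}}^2$ and the quadratic variation term by $2a^2\norme{y_N}_{L^2}^2+2\norme{G_N}_{L^2}^2$. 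After integrating on $[0,t]$, taking the supremum on $[0,\tau]$ and the expectation, the BDG inequality applied to the martingale part gives
\begin{equation*}
\esp\!\sup_{0\le s\le t}\!\left|\!\int_0^s\!\!\langle ay_N+G_N,y_N\rangle\,\d W\right|\le \tfrac12\esp\!\sup_{0\le s\le t}\norme{y_N}_{L^2}^2+C\,\esp\!\int_0^t\!\!(\norme{y_N}_{L^2}^2+\norme{G_N}_{L^2}^2)\d s,
\end{equation*}
which is absorbed on the left, and Gronwall's lemma closes the bound with a constant $C_0$ independent of $\tau\in(0,1)$. Weak/strong limits in the Galerkin index, together with uniqueness (a routine energy argument on the difference of two solutions), deliver the claim.

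Part b) proceeds in the same way but at the $H_0^1$-level: I apply It\^o's formula to $\norme{\nabla y_N(t)}_{L^2}^2$, which, after using $-\Delta y_N=\sum\lambda_k y_N^k\phi_k$ and integration by parts, yields the coercive contribution $-2\norme{\Delta y_N}_{L^2}^2\,\dt$. The source term is controlled by $\langle F_N,-\Delta y_N\rangle\le \tfrac12\norme{\Delta y_N}_{L^2}^2+C\norme{F_N}_{L^2}^2$, and the quadratic variation term by $2a^2\norme{\nabla y_N}_{L^2}^2+2\norme{\nabla G_N}_{L^2}^2$; BDG and Gronwall close the inequality exactly as in part a). The additional care needed is that the $H_0^1$-compatibility of the Galerkin approximation guarantees $y_N\in H^2(\dom)\cap H_0^1(\dom)$ so that all the integrations by parts are legitimate at the finite-dimensional level, and the uniform bound transfers to the limit $y$.

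The main technical subtleties are the following: (i) ensuring that the constant $C_0$ is independent of $\tau\in(0,1)$, which requires keeping track of the $\tau$-dependence only through the time integral and never through a multiplicative factor of $\tau$ in Gronwall's application; and (ii) justifying the application of It\^o's formula at the infinite-dimensional level, which is precisely why one passes through the Galerkin projection. The rest is bookkeeping; since the result is essentially contained in \cite{KR77,zhou92}, the proof would in practice be stated only briefly or simply invoked.
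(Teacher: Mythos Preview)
Your sketch is correct and follows the standard Krylov--Rozovskii energy method, which is exactly what the paper invokes: the lemma is stated in the appendix without proof, merely citing \cite{KR77} and \cite[Proposition 2.1]{zhou92} for the argument you outlined. Your final remark that ``the proof would in practice be stated only briefly or simply invoked'' is precisely what the paper does.
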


In the following result, we present some regularity estimates for backward stochastic parabolic equations.  We refer to \cite[Theorem 3.1]{zhou92} for a more general result.

\begin{lem}\label{lem:regularity_backward}

Let $\tau \in (0,1)$. We have the following energy estimates for \eqref{eq:back_linearized} (with $h\equiv 0$).

\begin{itemize}

\item[a)] Assume that $\tilde F\in L^2_{\fil}(0,\tau;H^{-1}(\dom))$ and $z_{\tau}\in L^2(\Omega,\mathcal F_{\tau};L^2(\dom))$, then \eqref{eq:back_linearized} has a unique solution $ (z,\bar{z})\in [L^2_{\fil}(0,\tau; H^1_0(\dom))\cap L^2_{\fil}(\Omega; C([0,\tau];L^2(\dom)))]\times L^2_\fil(0,T;L^2(\dom))$. Moreover, there exists a positive constant $C_0$ independent of $\tau$, $\tilde F$ and $z_{\tau}$ such that
\begin{align*}\notag
\esp&\left(\sup_{0\leq t\leq \tau} \|z(t)\|_{L^2(\dom)}^2\right)+\esp\left(\int_{0}^{\tau}\left[\|z(t)\|^2_{H^1_0(\dom)}+\|\bar{z}(t)\|^2_{L^2(\dom)}\right]\dt \right) \\
&\leq C_0\esp \left(\|z_{\tau}\|^2_{L^2(\dom)} +\int _{0}^{\tau}\|\tilde F(t)\|^2_{H^{-1}(\dom)}\dt\right) .
\end{align*}
\item[b)] Assume that $\tilde F\in L^2_{\fil}(0,\tau;L^2(\dom))$ and $z_{\tau}\in L^2(\Omega,\mathcal F_{\tau};H_0^1(\dom))$, then \eqref{eq:back_linearized} has a unique solution $(z,\bar{z})\in  [L^2_{\fil}(0,\tau; H^2(\dom))\cap L^2_{\fil}(\Omega; C([0,\tau];H_0^1(\dom)))]\times L^2_\fil(0,T;H^1(\dom))$. Moreover, there exists a positive constant $C_0$ independent of $\tau$, $\tilde F$ and $z_{\tau}$ such that
\begin{align*}\notag
\esp&\left(\sup_{0\leq t\leq \tau} \|z(t)\|_{H_0^1(\dom)}^2\right)+\esp\left(\int_{0}^{\tau}\left[\|z(t)\|^2_{H^2(\dom)}+\|\bar{z}(t)\|^2_{H^1(\dom)}\right]\dt \right) \\
&\leq C_0\esp \left(\|z_{\tau}\|^2_{H_0^1(\dom)} +\int _{0}^{\tau}\|\tilde F(t)\|^2_{L^2(\dom)}\dt\right) .
\end{align*}
\end{itemize}
\end{lem}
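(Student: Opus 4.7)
The plan is to view both statements as special cases of the general backward SPDE theory developed in \cite{KR77,zhou92}, and to follow the standard energy method at two regularity levels. Existence and uniqueness of the pair $(z,\bar z)$ at each level can be obtained via a spectral Galerkin approximation in the eigenbasis $(\phi_k)$ of $-\Delta$: project $z_\tau$ and $\tilde F$ onto $E_n=\mathrm{span}(\phi_1,\dots,\phi_n)$, solve the resulting finite-dimensional linear BSDE using the martingale representation theorem, and pass to the limit using the a priori bounds below. Adaptedness of both $z$ and $\bar z$ is guaranteed by that representation theorem.

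For part (a), the key step is It\^o's formula applied to $t\mapsto \|z(t)\|_{L^2}^2$ along $\d z=-(\Delta z+\tilde F)\,\d t+\bar z\,\d W$. It yields
\begin{equation*}
\d\|z\|_{L^2}^2=-2\langle z,\Delta z+\tilde F\rangle\,\d t+\|\bar z\|_{L^2}^2\,\d t+2(z,\bar z)\,\d W.
\end{equation*}
Integration by parts (using $z|_\Gamma=0$) turns $-\langle z,\Delta z\rangle$ into $\|\nabla z\|^2$; integrating from $t$ to $\tau$ and taking expectation removes the martingale (after standard localization) and produces
\begin{equation*}
\esp\|z(t)\|^2+2\esp\int_t^\tau\|\nabla z\|^2\,\d s+\esp\int_t^\tau\|\bar z\|^2\,\d s=\esp\|z_\tau\|^2+2\esp\int_t^\tau\langle z,\tilde F\rangle_{H^1_0,H^{-1}}\,\d s.
\end{equation*}
The structural feature of the backward equation is that the It\^o correction $\|\bar z\|^2$ sits on the \emph{left} with a favorable sign, which is exactly what provides the $L^2_\fil(0,\tau;L^2(\dom))$ control of $\bar z$. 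A Young inequality $|\langle z,\tilde F\rangle|\leq \|\nabla z\|^2+\tfrac14\|\tilde F\|^2_{H^{-1}}$ lets me absorb $\|\nabla z\|^2$ into the left-hand side; a backward Gronwall argument then closes the estimate on $\esp\|z(t)\|^2$. The supremum-in-time bound is recovered via a Burkholder-Davis-Gundy estimate on $\int_t^\tau(z,\bar z)\,\d W$, followed by Cauchy-Schwarz-Young to reabsorb $\tfrac12\esp\sup_s\|z(s)\|^2$.

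For part (b), I repeat the energy argument one derivative higher, working rigorously at the Galerkin level since $\phi_k\in H^2\cap H^1_0(\dom)$. Applying It\^o to $\|\nabla z(t)\|_{L^2}^2$ and integrating by parts twice (using $z|_\Gamma=0$) gives
\begin{equation*}
\d\|\nabla z\|^2=2\|\Delta z\|^2\,\d t-2(\Delta z,\tilde F)\,\d t+\|\nabla \bar z\|^2\,\d t+2(\nabla z,\nabla\bar z)\,\d W.
\end{equation*}
Young on $(\Delta z,\tilde F)$ and elliptic regularity $\|z\|_{H^2}\sim\|\Delta z\|_{L^2}$ for $z\in H^2\cap H^1_0$ yield the announced $L^2_\fil(0,\tau;H^2(\dom))$ bound on $z$ and $L^2_\fil(0,\tau;H^1(\dom))$ bound on $\bar z$; a further BDG estimate gives the sup-in-time bound on $\|z\|_{H^1_0}$.

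The main obstacle is rigorously justifying the $H^1$-level It\^o formula and the regularity $\bar z\in L^2_\fil(0,\tau;H^1(\dom))$. The spectral Galerkin scheme is the cleanest way to handle this: the finite-dimensional approximation satisfies the identity above by It\^o in $\R^n$, and the uniform bounds transfer to the limit after weak-$*$ compactness arguments, exactly as in \cite[Theorem 3.1]{zhou92}. The continuity in time in $L^2(\dom)$ and $H^1_0(\dom)$ follows by a standard density argument from the energy identity.
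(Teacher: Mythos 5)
Your proposal is correct and follows essentially the same route as the paper, which does not prove \Cref{lem:regularity_backward} itself but delegates it to \cite[Theorem 3.1]{zhou92}; your Galerkin approximation in the eigenbasis combined with the It\^o energy identities at the $L^2$ and $H^1$ levels is precisely the standard argument behind that reference, and the favorable sign of $\|\ov{z}\|^2$ (resp.\ $\|\nabla\ov{z}\|^2$) on the left is indeed the key structural point. The only slip is that you drop the drift term $\sigma\ov{z}$ of \eqref{eq:back_linearized} when writing It\^o's formula; this is harmless, since the resulting cross terms $2\sigma(z,\ov{z})$ and $2\sigma(\Delta z,\ov{z})$ are absorbed by Young's inequality into the good $\|\ov{z}\|^2_{L^2}$ term and a Gronwall contribution.
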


\bibliographystyle{alpha}
\small{\bibliography{bib_nonlin_stoch}}

\end{document}